\documentclass[12pt,twoside]{article}
\usepackage{amssymb,amsmath,amsthm,latexsym}
\usepackage{amsfonts}
\usepackage{enumerate}
\usepackage[demo]{graphicx}
\usepackage{caption}
\usepackage{subcaption}
\usepackage{relsize}
\usepackage{tikz}
\usepackage{multirow}
\usepackage{float}
\allowdisplaybreaks
\usepackage{mathrsfs}
\usepackage{doi}
\usetikzlibrary{arrows.meta,positioning, quotes}
\usepackage{calc}

\usetikzlibrary{decorations.markings}
\tikzstyle{vertex}=[circle, draw, inner sep=0pt, minimum size=6pt]
\newcommand{\vertex}{\node[vertex]}
\newtheorem{thm}{Theorem}[section]

\newtheorem{cor}[thm] {Corollary}
\newtheorem{lem} [thm]{Lemma}
\newtheorem{prop} [thm]{Proposition}
\theoremstyle{definition} 

\newtheorem{rmk}[thm] {Remark}
\newtheorem{defn}[thm]{Definition}

\parindent=0pt
\parskip= 4.5 pt
\lineskip=3pt \oddsidemargin=10mm \evensidemargin=10mm
\topmargin=35pt \headheight=12pt \footskip=30pt \textheight 8.1in
\textwidth=150mm \raggedbottom \pagestyle{myheadings} \hbadness = 10000 \tolerance = 10000

\numberwithin{equation}{section}

\newcommand\sgn{\operatorname{sgn}}
\newcommand\bdim{\operatorname{bdim}}

\voffset=-12mm
\parindent=12pt
\oddsidemargin=11mm
\evensidemargin=11mm
\topmargin=55pt
\headheight=12pt
\footskip=30pt
\textheight 8.1in
\textwidth=150mm
\raggedbottom
\pagestyle{myheadings}
\hbadness = 10000
\tolerance = 10000

\begin{document}
	\label{'ubf'}
	\setcounter{page}{1} 

	\markboth {\hspace*{-9mm} \centerline{\footnotesize \sc
			Vector Valued Switching in the Products of Signed Graphs}
	}
	{ \centerline {\footnotesize \sc Albin,  Germina}
	}
	\begin{center}
		{
			\Large \textbf{Vector Valued Switching in the Products of Signed Graphs
				}
			}

		\bigskip
					Albin Mathew \footnote{\small Department of Mathematics, Central University of Kerala, Kasaragod - 671316,\ Kerala,\ India. \textbf{Email:}\texttt{ albinmathewamp@gmail.com, }}\, Germina K A \footnote{\small Department of Mathematics, Central University of Kerala, Kasaragod - 671316,\ Kerala,\ India. \textbf{Email:}\texttt{ srgerminaka@gmail.com}}
			\\

\end{center}

\thispagestyle{empty}
\begin{abstract}
A signed graph is a graph whose edges are labeled either as positive or negative. The concept of vector valued switching and balancing dimension of signed graphs were introduced  by S. Hameed et al. In this paper, we deal with the balancing dimension of various products of signed graphs, namely the Cartesian product, the lexicographic product, the tensor product and the strong product.
\end{abstract}

\textbf{Keywords:} Signed graph, vector valued switching, balancing dimension, product of signed graphs.

\textbf{Mathematics Subject Classification (2020):}  05C22, 05C76.


\section{Introduction}

Throughout this paper, unless otherwise mentioned,  we consider only finite, simple, connected and undirected graphs and signed graphs. For the standard notation and terminology in graphs  and signed graphs  not given here, the reader may refer to \cite{fh} and \cite{tz1,tz3} respectively.

  A signed graph $\Sigma=(G,\sigma)$ is a graph $G$, together with a function $\sigma$ that assigns a sign $+1$ or $-1$ to each of its edges. The sign of a cycle in $\Sigma$  is defined as the product of the signs of its edges, and $\Sigma$ is balanced if it does not contain any negative cycles. A signed graph $\Sigma=(G,\sigma)$ is said to be antibalanced if the signed graph $-\Sigma=(G,-\sigma)$ is balanced. A switching function for $\Sigma$ is a function $\zeta:V(\Sigma)\rightarrow\{-1,1\}$. For an edge $e=uv$ in $\Sigma$, the switched signature $\sigma^\zeta$ is defined as $\sigma^\zeta(e)=\zeta(u)\sigma(e)\zeta(v)$,  and the switched signed graph is  $\Sigma^\zeta=(G,\sigma^\zeta)$ . The signs of cycles are unchanged by switching and every balanced (antibalanced) signed graph can be switched to an all-positve (all-negative) signed graph. We call $\Sigma_1$ and $\Sigma_2$ switching equivalent if there is a switching function $\zeta$ such that $\Sigma_2=\Sigma_1^{\zeta}$ (see \cite[Section 3]{tz1}).

  The notion of vector valued switching and balancing dimension of signed graphs were defined by Hameed \textit{et al.} in \cite{sa1}. In this paper, we focus on computing the balancing dimensions of various products of signed graphs such as the Cartesian product, the lexicographic product, the tensor product, and the strong product.

  To begin with, we recall some notations, definitions and fundamental results from \cite{sa1}. In what follows, $\Omega=\{-1,0,1\} $ and the inner product used is the same as that on $\mathbb{R}^k$ restricted to $\Omega^k$.
  \begin{defn}(Vector Valued Switching or $k$-switching) \cite{sa1}
   Let $\Sigma=(G,\sigma)$ be a given signed graph  where $G=(V,E)$.  A vector valued switching function is a function $\zeta:V\rightarrow \Omega^k \subset \mathbb{R}^k$ such that $\langle \zeta(u), \zeta (v)\rangle\neq 0$ for all edges $uv\in E$.  The switched signed graph $\Sigma^\zeta=(G,\sigma^\zeta)$ has the signing $$\sigma^\zeta(uv)=\sigma(uv) \sgn (\langle \zeta(u), \zeta (v)\rangle).$$
  \end{defn}
Note that the switching that has been discussed so far in literature \cite{tz1} can be considered as $1$-switching.
Using  vector valued switching, the balancing dimension for a signed graph is defined as follows.
\begin{defn} (Balancing Dimension) \cite{sa1}
	Let $\Sigma=(G,\sigma)$ be a given signed graph  where $G=(V,E)$. We say that the balancing dimension of $\Sigma$ is $k$ and write it as $\bdim(\Sigma)$, if $k\ge 1$ is the least integer such that a vector valued switching function $\zeta:V\rightarrow \Omega^k\subset \mathbb{R}^k$ switches $\Sigma$ to an all positive signed graph.

	Such a $k$-switching function  $\zeta$  is called a positive $k$-switching function (briefly a $k$-positive function) for $\Sigma$.
\end{defn}
One may note that $\bdim(\Sigma)=1$ if and only if $\Sigma$ is balanced. Also, the balancing dimension of a subgraph of $\Sigma$ cannot exceed the balancing dimension of $\Sigma$. We will also make use of the fact that balancing dimension is $1$-switching  invariant (see \cite{sa1}).
\section{Balancing dimension of the product of signed graphs}
In this section, we establish some results regarding  the balancing dimension of the Cartesian product, the lexicographic product, the tensor product, and the strong product  of signed graphs.

\subsection{Balancing dimension of the Cartesian product}
 The Cartesian product of two signed graphs is defined by Germina et al. in \cite{sg1}.

\begin{defn}\cite{sg1}
	The Cartesian product $\Sigma_1\Box \Sigma_2$ of two signed graphs $\Sigma_1=(G_1,\sigma_1)$ and $\Sigma_2=(G_2,\sigma_2)$ is defined as the Cartesian product of the underlying unsigned graphs with the signature function $\sigma$ for the labeling of the edges defined by
	$$\sigma((u_i,v_j)(u_k,v_l))=
	\begin{cases}
		\sigma_1(u_iu_k), \mbox{ if } j=l,\\
		\sigma_2(v_jv_l),  \mbox{ if } i=k.
	\end{cases}
	$$
\end{defn}
If $\Sigma_1$ and $\Sigma_2$ are balanced, then their Cartesian product $\Sigma_1\Box\Sigma_2$ is also balanced (see \cite{sg1}) and hence $\bdim(\Sigma_1\Box\Sigma_2)=1$. We now consider the case where one of the factors is balanced.
\begin{thm}
Let $\Sigma_1=(G_1,\sigma_1)$ and $\Sigma_2=(G_2,\sigma_2)$ be two signed graphs and let $\Sigma_1\Box\Sigma_2$ be their Cartesian product. Then
	$$\bdim(\Sigma_1\Box\Sigma_2)=
\begin{cases}
	\bdim(\Sigma_1), \mbox{  if $\Sigma_2$   is balanced}\\
	\bdim(\Sigma_2), \mbox{  if $\Sigma_1$   is balanced}.
\end{cases}
$$
\end{thm}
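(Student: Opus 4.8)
The plan is to prove the first case, that $\bdim(\Sigma_1\Box\Sigma_2)=\bdim(\Sigma_1)$ whenever $\Sigma_2$ is balanced; the second case then follows immediately from the commutativity $\Sigma_1\Box\Sigma_2\cong\Sigma_2\Box\Sigma_1$ of the Cartesian product. I would establish the equality by proving the two inequalities separately. For the lower bound, observe that for any fixed vertex $v_j\in V(G_2)$ the vertices $\{(u_i,v_j):u_i\in V(G_1)\}$ together with the edges inherited from the first coordinate form a copy of $\Sigma_1$ sitting inside $\Sigma_1\Box\Sigma_2$ with identical signs. Since the balancing dimension of a subgraph cannot exceed that of the ambient signed graph, this at once gives $\bdim(\Sigma_1)\le\bdim(\Sigma_1\Box\Sigma_2)$.

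For the upper bound I would produce an explicit positive $k$-switching function on the product, where $k=\bdim(\Sigma_1)$. Let $\zeta_1:V(G_1)\to\Omega^k$ be a $k$-positive function for $\Sigma_1$, and, since $\Sigma_2$ is balanced, let $\eta:V(G_2)\to\{-1,1\}$ be an ordinary ($1$-)switching function that switches $\Sigma_2$ to the all-positive signing. Define $\zeta:V(G_1)\times V(G_2)\to\Omega^k$ by $\zeta(u_i,v_j)=\eta(v_j)\,\zeta_1(u_i)$; the scalar $\eta(v_j)\in\{-1,1\}$ keeps the values inside $\Omega^k$. I would then check the two edge types of the product. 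On a horizontal edge $(u_i,v_j)(u_k,v_j)$ the factor $\eta(v_j)^2=1$ cancels, so the inner product equals $\langle\zeta_1(u_i),\zeta_1(u_k)\rangle$, which is nonzero and carries precisely the sign that switches $\sigma_1(u_iu_k)$ to $+1$. On a vertical edge $(u_i,v_j)(u_i,v_l)$ the inner product equals $\eta(v_j)\eta(v_l)\,\lVert\zeta_1(u_i)\rVert^2$; since $\lVert\zeta_1(u_i)\rVert^2>0$, its sign is $\eta(v_j)\eta(v_l)$, which is exactly what turns $\sigma_2(v_jv_l)$ into $+1$. Hence $\zeta$ is a positive $k$-switching function for $\Sigma_1\Box\Sigma_2$, giving $\bdim(\Sigma_1\Box\Sigma_2)\le k=\bdim(\Sigma_1)$.

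The one point that must be handled with care, and the only place where the standing hypotheses enter, is the claim $\lVert\zeta_1(u_i)\rVert^2>0$ used on the vertical edges: the construction collapses if some vertex of $G_1$ is assigned the zero vector. I would justify this by noting that a vertex with $\zeta_1(u_i)=0$ would force $\langle\zeta_1(u_i),\zeta_1(u_k)\rangle=0$ on every edge incident with $u_i$, contradicting the defining nonvanishing condition of a vector valued switching function as soon as $u_i$ has a neighbor; connectedness of $G_1$ excludes isolated vertices, so $\zeta_1$ never vanishes. The same two computations also supply the nondegeneracy requirement $\langle\zeta(x),\zeta(y)\rangle\neq 0$ on both edge types. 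I expect this bookkeeping, rather than any genuine difficulty, to be the main thing to get right; the essential mechanism is simply that multiplying by the scalar $\eta(v_j)$ absorbs the (balanced) signature of $\Sigma_2$ while leaving the first-coordinate inner products untouched.
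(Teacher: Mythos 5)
Your proposal is correct and follows essentially the same route as the paper: the lower bound from $\Sigma_1$ being a subgraph of the product, and the upper bound via the explicit function $\zeta(u_i,v_j)=\zeta_2(v_j)\zeta_1(u_i)$ checked separately on the two edge types. Your explicit justification that $\zeta_1$ never takes the zero vector (so $\lVert\zeta_1(u_i)\rVert^2>0$) is a point the paper leaves implicit, but it is not a different argument.
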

\begin{proof}
	Suppose $\bdim(\Sigma_1)=k$ and $\Sigma_2$ is balanced. Let $\zeta_1:V(\Sigma_1)\rightarrow\Omega^k$ and $\zeta_2:V(\Sigma_2)\rightarrow \{-1,1\}$ be the corresponding switching functions. We now define $\zeta: V(\Sigma_1\times\Sigma_2)\rightarrow\Omega^k$ by $\zeta((u_i,v_j))=\zeta_1(u_i)\zeta_2(v_j)$ for $1\leq i \leq |V(\Sigma_1)|$ and $1\leq j \leq |V(\Sigma_2)|$.

	Now for any edge $e=(u_i,v_j)(u_k,v_l)$ in $\Sigma_1\Box\Sigma_2$, we have,
	\begin{equation}\label{eqn1}
		\sigma^\zeta((u_i,v_j)(u_k,v_l))=\sigma((u_i,v_j)(u_k,v_l))\sgn(\langle\zeta((u_i,v_j)),\zeta((u_k,v_l))\rangle).
	\end{equation}
	If $j=l$, Equation~\ref*{eqn1} becomes
		\begin{align*}
		\sigma^\zeta((u_i,v_l)(u_k,v_j))
		&=\sigma_1(u_iu_k)\sgn(\langle\zeta((u_i,v_j)),\zeta((u_k,v_l))\rangle)\\
		&=\sigma_1(u_iu_k)\sgn(\langle\zeta_1(u_i)\zeta_2(v_l),\zeta_1(u_k)\zeta_2(v_l)\rangle)\\
		&=(\zeta_2(v_l))^2\sigma_1(u_iu_k)\sgn(\langle\zeta_1(u_i),\zeta_1(u_k)\rangle)\\
		&=(\zeta_2(v_l))^2\sigma_1^{\zeta_1}(u_iu_k)\\
		&=+1.
		\end{align*}
	Similarly, if $i=k$, Equation~\ref*{eqn1} becomes
		\begin{align*}
			\sigma^\zeta((u_i,v_j)(u_k,v_l))
			&=\sigma_2(v_jv_l)\sgn(\langle\zeta((u_k,v_j)),\zeta((u_k,v_l))\rangle)\\
			&=\sigma_2(v_jv_l)\sgn(\langle\zeta_1(u_k)\zeta_2(v_j),\zeta_1(u_k)\zeta_2(v_l)\rangle)\\
			&=\sigma_2(v_jv_l)\zeta_2(v_j)\zeta_2(v_l)\sgn(\langle\zeta_1(u_k),\zeta_1(u_k)\rangle)\\
			&=\sigma_2^{\zeta_2}(v_jv_l)\sgn(\lVert\zeta_1(u_k)\rVert^2)\\
			&=+1.
		  	\end{align*}

Thus, $\zeta$ switches $\Sigma_1\Box\Sigma_2$ to all-positive, and hence $\bdim(\Sigma_1\Box\Sigma_2)\leq k$. However, since $\Sigma_1$ is a subgraph of $\Sigma_1\Box\Sigma_2$, we must have $\bdim(\Sigma_1\Box\Sigma_2)=k=\bdim(\Sigma_1)$.

Similar is the proof of the next part.
\end{proof}

 \begin{thm}\cite{dl}
 	Let $\Sigma_1$ and $\Sigma_2$ be two signed graphs and let $\Sigma_1\Box\Sigma_2$ be their Cartesian product. If $\Sigma_1 \simeq \Sigma_1'$ and $\Sigma_2 \simeq \Sigma_2'$, then $\Sigma_1\Box\Sigma_2\simeq\Sigma_1'\Box\Sigma_2'$.
 \end{thm}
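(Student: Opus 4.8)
The plan is to realize the equivalence by an explicit $1$-switching function on the product, built from the switching functions of the two factors, exactly in the spirit of the construction used in the preceding theorem on $\bdim(\Sigma_1\Box\Sigma_2)$. By definition of switching equivalence, $\Sigma_1\simeq\Sigma_1'$ supplies a function $\zeta_1:V(\Sigma_1)\to\{-1,1\}$ with $\Sigma_1'=\Sigma_1^{\zeta_1}$, and $\Sigma_2\simeq\Sigma_2'$ supplies a function $\zeta_2:V(\Sigma_2)\to\{-1,1\}$ with $\Sigma_2'=\Sigma_2^{\zeta_2}$. I would then define $\zeta:V(\Sigma_1\Box\Sigma_2)\to\{-1,1\}$ by $\zeta((u_i,v_j))=\zeta_1(u_i)\zeta_2(v_j)$, and prove the stronger assertion that $(\Sigma_1\Box\Sigma_2)^{\zeta}=\Sigma_1'\Box\Sigma_2'$ as signed graphs, which in particular gives the claimed switching equivalence.

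The verification splits along the two edge types dictated by the Cartesian product signature. For an edge $(u_i,v_j)(u_k,v_j)$ arising from $u_iu_k\in E(\Sigma_1)$ (the case $j=l$), the factor $\zeta_2(v_j)$ occurs at both endpoints, so $(\zeta_2(v_j))^2=1$ and the switched sign collapses to $\zeta_1(u_i)\sigma_1(u_iu_k)\zeta_1(u_k)=\sigma_1^{\zeta_1}(u_iu_k)=\sigma_1'(u_iu_k)$. Symmetrically, for an edge $(u_i,v_j)(u_i,v_l)$ arising from $v_jv_l\in E(\Sigma_2)$ (the case $i=k$), the factor $\zeta_1(u_i)$ squares to $1$ and the switched sign collapses to $\zeta_2(v_j)\sigma_2(v_jv_l)\zeta_2(v_l)=\sigma_2^{\zeta_2}(v_jv_l)=\sigma_2'(v_jv_l)$. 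Since these are exactly the signatures that $\Sigma_1'\Box\Sigma_2'$ assigns on its $\Sigma_1'$-type and $\Sigma_2'$-type edges respectively, and since $\zeta$ manifestly never annihilates an edge (it takes values in $\{-1,1\}$), the function $\zeta$ switches $\Sigma_1\Box\Sigma_2$ onto $\Sigma_1'\Box\Sigma_2'$, establishing $\Sigma_1\Box\Sigma_2\simeq\Sigma_1'\Box\Sigma_2'$.

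I do not expect a genuine obstacle here: the argument is a direct sign computation, and the only point requiring care is the bookkeeping of the two edge types together with the observation that in each type the ``cross'' factor squares to $1$. The whole proof is structurally identical to that of the preceding theorem, with the simplification that $\zeta_1$ and $\zeta_2$ are now ordinary scalar ($1$-switching) functions, so all inner products reduce to plain products of signs and no $\sgn$ of a norm enters. For completeness one should also note that both products are formed on the same underlying graph $G_1\Box G_2$, so it suffices to match signatures edge by edge, which the two cases above accomplish.
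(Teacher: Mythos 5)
Your argument is correct, and the sign computation in both edge classes is exactly right: with $\zeta((u_i,v_j))=\zeta_1(u_i)\zeta_2(v_j)$ the ``cross'' factor squares away and each edge of $\Sigma_1\Box\Sigma_2$ acquires precisely the sign it carries in $\Sigma_1'\Box\Sigma_2'$. Note, however, that the paper does not prove this statement at all --- it is quoted from Lajou \cite{dl} and used only to normalize the negative edges of the factor cycles --- so there is no in-paper proof to compare against; your construction is the natural one and coincides in structure with the proofs the paper \emph{does} give for the analogous switching-compatibility statements for the lexicographic and strong products (Theorem~\ref{hglexswitcheq} and Lemma~\ref{strongswitcheq1}), except that there the authors switch one factor at a time via $\eta'((u_i,v_j))=\eta(u_i)$ and then appeal to transitivity, whereas you switch both factors simultaneously; the two routes are equivalent. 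One small caveat: in \cite{dl} the relation $\simeq$ is switching \emph{isomorphism} rather than switching equivalence on a fixed underlying graph, so a fully faithful proof of the cited statement would additionally observe that isomorphisms $\phi_1:G_1\to G_1'$ and $\phi_2:G_2\to G_2'$ induce the isomorphism $\phi_1\times\phi_2$ of the product graphs, after which your switching computation finishes the job; this extra step is routine and does not affect the way the theorem is used in the paper.
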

 \begin{thm}\cite{sa1}\label{negtriangle}
 	If $\Sigma$ contains a negative triangle, then $\bdim(\Sigma)\geq3$.
 \end{thm}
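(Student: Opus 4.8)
The plan is to reduce the statement to a single negative triangle and then rule out a $2$-dimensional positive switching function on it. Since the balancing dimension of a subgraph never exceeds that of the ambient signed graph, it suffices to prove that a negative triangle $T$ (the signed graph on $K_3$ whose three edge signs multiply to $-1$) satisfies $\bdim(T)\ge 3$. Because $T$ is unbalanced, $\bdim(T)\ge 2$, so the real content is to exclude $\bdim(T)=2$. I would argue by contradiction: assume there is a positive $2$-switching function $\zeta\colon V(T)\to\Omega^2\subset\mathbb{R}^2$. Writing the triangle as $a,b,c$, each of $\zeta(a),\zeta(b),\zeta(c)$ must be nonzero (otherwise an incident inner product vanishes) and all three pairwise inner products are nonzero. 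Since every edge is switched to $+1$, we have $\sgn\langle\zeta(a),\zeta(b)\rangle=\sigma(ab)$ and similarly for the other two edges, so the three inner-product signs multiply to $\sigma(ab)\sigma(bc)\sigma(ca)=-1$; that is, an \emph{odd} number of the three inner products are negative.

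The heart of the argument is the claim that this is impossible: for any three nonzero vectors in $\Omega^2$ with pairwise nonzero inner products, the number of negative inner products is even. I would prove this by viewing each nonzero vector of $\Omega^2=\{-1,0,1\}^2$ as one of the eight directions that are integer multiples of $45^\circ$. For two such vectors the inner product is positive exactly when the angle between them is $0^\circ$ or $45^\circ$, is zero exactly at $90^\circ$, and is negative exactly at $135^\circ$ or $180^\circ$. Since the switching forbids the value $0$, a \emph{positive} pair forces angle at most $45^\circ$ and a \emph{negative} pair forces angle at least $135^\circ$, with the $90^\circ$ case excluded. Now I would dispose of the two odd cases. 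If all three pairs were negative, each vector would lie at angle $\ge 135^\circ$ from the other two; but the directions lying at angle $\ge 135^\circ$ from a fixed vector form an arc of half-width $45^\circ$ about its antipode, and for two vectors already $\ge 135^\circ$ apart these antipodes are also $\ge 135^\circ$ apart, so the two arcs are disjoint and no third direction can be $\ge 135^\circ$ from both. If exactly one pair were negative, the two positive pairs would share a common vertex, say $a$; then $\zeta(b)$ and $\zeta(c)$ both lie within $45^\circ$ of $\zeta(a)$, hence within $90^\circ$ of each other, so their inner product cannot be negative, contradicting that $bc$ is the negative pair.

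Combining the two cases, the number of negative inner products among $\{a,b,c\}$ is forced to be even, contradicting the parity $-1$ extracted above. This rules out $k=2$, and together with $\bdim(T)\ge 2$ we obtain $\bdim(T)\ge 3$, whence $\bdim(\Sigma)\ge 3$. The step I expect to be the main obstacle is precisely this parity claim, and in particular making rigorous the role of the excluded value $0$: it is exactly the ban on orthogonal ($90^\circ$) pairs that creates the gap between the positive region (angle $\le 45^\circ$) and the negative region (angle $\ge 135^\circ$) and thereby forces an even count. Both the continuous analogue (three pairwise-obtuse directions, e.g.\ at $120^\circ$) and higher dimensions fail this parity, so the proof must genuinely use that the entries come from $\{-1,0,1\}$ and that $k=2$, which is what confines the vectors to these eight directions.
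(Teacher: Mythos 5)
Your proof is correct. Note first that the paper itself does not prove this statement: it is quoted from the reference \cite{sa1} as a known result, so there is no in-paper argument to compare against. Judged on its own, your argument is sound and complete. The reduction to a single negative triangle via monotonicity of $\bdim$ under subgraphs is legitimate (the paper states this monotonicity explicitly in the introduction), the observation that $\zeta$ must be nonzero on every vertex of the triangle is correct, and the parity extraction $\sgn\langle\zeta(a),\zeta(b)\rangle\,\sgn\langle\zeta(b),\zeta(c)\rangle\,\sgn\langle\zeta(c),\zeta(a)\rangle=\sigma(ab)\sigma(bc)\sigma(ca)=-1$ is exactly what a positive $2$-switching would require. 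Your central claim also checks out: the eight nonzero vectors of $\Omega^2$ lie at multiples of $45^\circ$, the sign of an inner product depends only on the angle, and the exclusion of the $90^\circ$ (orthogonal) case leaves positive pairs at angle at most $45^\circ$ and negative pairs at angle at least $135^\circ$; the two case analyses (three negatives impossible because the two ``antipodal'' arcs of half-width $45^\circ$ are disjoint when their centers are at least $135^\circ$ apart, and exactly one negative impossible because two vectors within $45^\circ$ of a common third are within $90^\circ$ of each other) are both airtight. You are also right to flag that the argument genuinely uses $k=2$ and the restriction to $\Omega^2$, since the continuous and higher-dimensional analogues of the parity claim fail.
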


  We now compute the balancing dimension of the Cartesian product of two unbalanced signed graphs. To begin with, we consider the Cartesian product of two unbalanced cycles.

 \begin{prop}
 	Let $C_m^-$ and $C_n^-$, $m,n\geq3$ be two unbalanced cycles and let $C_m^- \Box C_n^-$ be their Cartesian product. Then,
 		$$\bdim(C_m^-\Box C_n^-)=
 	\begin{cases}
 		2, \mbox{  if $m,n>3$}\\
 		3, \mbox{ otherwise}.
 	\end{cases}$$
\end{prop}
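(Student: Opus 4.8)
The plan is to reduce to a canonical sign pattern, read off the lower bounds from the two results already available, and then produce explicit vector valued switching functions matching those bounds. First I would normalise the two factors. Since balancing dimension is $1$-switching invariant and, by the switching-equivalence theorem for Cartesian products quoted above, $C_m^-\simeq \widetilde{C}_m^-$ and $C_n^-\simeq \widetilde{C}_n^-$ imply $C_m^-\Box C_n^-\simeq \widetilde{C}_m^-\Box\widetilde{C}_n^-$, I may switch each unbalanced cycle to a convenient representative. Labelling the vertices of the torus $C_m\Box C_n$ by $(i,j)\in\mathbb{Z}_m\times\mathbb{Z}_n$, I arrange that a horizontal edge $(i,j)(i+1,j)$ is negative exactly on the wrap $i=m-1$ and a vertical edge $(i,j)(i,j+1)$ is negative exactly on the wrap $j=n-1$. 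Two lower bounds are then immediate: the product contains the negative cycle $C_m^-$, so it is unbalanced and $\bdim\ge 2$; and if $m=3$ (or $n=3$) each horizontal (resp.\ vertical) copy is a negative triangle, so $\bdim\ge 3$ by Theorem~\ref{negtriangle}.

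Next, for $m,n>3$ I would exhibit a $2$-positive function, proving $\bdim\le 2$. I work with the eight nonzero vectors of $\Omega^2$, viewed as clock directions at multiples of $45^\circ$, and record the elementary fact that the sign of $\langle\,\cdot\,,\cdot\,\rangle$ between two of them is determined by their circular distance $d$: it is $+$ for $d\in\{0,1\}$, $0$ for $d=2$, and $-$ for $d\in\{3,4\}$. I then set $\theta(i,j)=f(i)+g(j)\bmod 8$, where $f$ winds once around in the $C_m$ direction (e.g.\ $f=0,1,2,3,4,4,\dots,4$) and $g$ winds once in the $C_n$ direction, and let $\zeta(i,j)$ be the clock vector of $\theta(i,j)$. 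The decisive point is that along a horizontal edge the increment of $\theta$ is $f(i+1)-f(i)$ and along a vertical edge it is $g(j+1)-g(j)$, so the two directions decouple and each reproduces the single-cycle pattern: every positive edge gets circular distance $1$ (or $0$ on a stall) and each wrap gets circular distance $3$ or $4$, never $2$. Hence $\zeta$ is $2$-positive and $\bdim=2$. This is exactly where $m,n\ge 4$ is used: a triangle forces the closing edge to have circular distance at most $2$, so its wrap can never be made negative in $\Omega^2$, which is the $2$-dimensional shadow of the negative-triangle obstruction.

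Finally, when $m=3$ (the case $n=3$ is symmetric) I would build a $3$-positive function to get $\bdim\le 3$. Here I switch the triangle factor to its all-negative representative, so that every horizontal edge is negative, and I seek, for each level $j\in\mathbb{Z}_n$, a triple $(a_j,b_j,c_j)$ of vectors in $\Omega^3$ that is pairwise negatively correlated (closing the all-negative triangle) and such that corresponding vectors at consecutive levels are positively correlated, except across the wrap $j=n-1\to 0$ where they are negatively correlated (closing the negative $C_n^-$ in each column). Starting from a fixed pairwise-negative triple and its antipode, the sequence is obtained by rotating the whole triple from it towards its antipode in a few positive steps and stalling thereafter, so that only the wrap reverses sign; combined with the lower bound this yields $\bdim=3$.

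I expect the last step to be the main obstacle. In the $2$-dimensional case the construction is clean because horizontal and vertical increments decouple additively on the clock; no such global additive structure survives in $\Omega^3$, so the three vectors $a_j,b_j,c_j$ must be rotated simultaneously through the finite set $\Omega^3$ while (i) keeping the triple pairwise negative at every level, (ii) keeping consecutive levels positively correlated coordinatewise, (iii) forcing the single negative wrap, and (iv) never landing on a zero inner product along any edge. Verifying that one explicit table of vectors meets all four requirements for every $n\ge 3$ is the delicate part; once it is in place, combining it with the lower bounds gives $\bdim(C_3^-\Box C_n^-)=3$, and together with the previous paragraph this completes the proposition.
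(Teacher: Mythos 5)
Your overall strategy is the same as the paper's: normalise each unbalanced cycle by $1$-switching to a single negative edge, get the lower bound $2$ from unbalance and the lower bound $3$ from the negative triangles when $m=3$ or $n=3$, and then exhibit explicit $k$-positive functions. Your treatment of the case $m,n>3$ is complete and correct, and in fact cleaner than the paper's: the ``clock'' observation that the sign of the inner product of two nonzero vectors of $\Omega^2$ depends only on their circular distance, together with the additive ansatz $\theta(i,j)=f(i)+g(j)\bmod 8$, makes the decoupling of the two cycle directions transparent, whereas the paper simply writes down a $4\times 4$ block table (Table~\ref{table:1}) and leaves the verification to the reader.

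The gap is in the three cases requiring a $3$-positive function, which is where the real content of the proposition lies; you explicitly stop short of producing the switching function, and the recipe you sketch does not work as stated. If the vertical factor $C_n^-$ has its unique negative edge on the wrap $j=n-1\to 0$, then taking the last level to be the antipodal triple $(-a,-b,-c)$ of the first forces every intermediate level to be positively correlated, coordinatewise, with both $a_0$ and $-a_0$ along the path from $T_0$ to $T_{n-1}$; already for $n=3$ this requires $\langle a_1,a_0\rangle>0$ and $\langle a_1,-a_0\rangle>0$ simultaneously, which is impossible. So the antipode cannot be the target triple in general, and some other terminal configuration must be chosen. The paper resolves exactly this point by exhibiting concrete tables (Tables~\ref{table:2}--\ref{table:4}): the three levels of the triangle factor are assigned the triples built from $(-1,-1,1)$, $(1,-1,-1)$, $(-1,-1,-1)$, $(1,1,1)$ and $(1,0,0)$, arranged so that the unique negative horizontal and vertical edges land on negative inner products and all other edges on positive ones. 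Until you supply and check such a table (or an equivalent closed-form construction) for all $n\ge 3$, the upper bound $\bdim\le 3$ in these cases is not established, and the proposition is only half proved.
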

 \begin{proof}
 Since balancing dimension is $1$ - switching invariant, by using Theorem~\cite{dl}, we can consider $C_m^-=u_1u_2\cdots u_m$ and $C_n^-=v_1v_2\cdots v_n$, where $u_1u_2$ and $v_1v_2$ are the only negative edges of $C_m^-$ and $C_n^-$ respectively. We need to consider four cases: $n,m>3$, $m=3$ and $n>3$, $m>3$ and $n=3$, and $n=m=3$.

 In the first case, since $C_m^- \Box C_n^-$ is unbalanced, we have $\bdim(C_m^-\Box C_n^-)\geq 2$. Now, the function $\zeta_1:V(C_m^-\Box C_n^-)\rightarrow\Omega^2$ given in Table~\ref{table:1} switches $C_m^-\Box C_n^-$ to all-positive. Hence, $\bdim(C_m^-\Box C_n^-)=2$.
 In the next three cases, $C_m^- \Box C_n^-$ contains a negative triangle and hence  $\bdim(C_m^-\Box C_n^-)\geq 3$ . Now, the functions $\zeta_i:V(C_m^-\Box C_n^-)\rightarrow\Omega^3$, where $i\in\{2,3,4\}$ given in Tables~\ref{table:2} to \ref{table:4} respectively, switches $C_m^-\Box C_n^-$ to all-positive. Hence $\bdim(C_m^-\Box C_n^-)=3$ in each of these cases.
 \end{proof}
 \begin{table}[H]
 	\centering
 \begin{tabular}{|c|c|c|c|c|}
 	\hline
 	$\zeta_1((u_i,v_j))$&$v_1$  & $v_2$ & $v_3,v_4, \cdots , v_{n-1}$  & $v_n$ \\
 	\hline
 	$u_1$& $(-1,1)$ &$(1,0)$  & $(1,1)$  & $(0,1)$ \\
 	\hline
 	$u_2$& $(1,0)$ & $(-1,-1)$  & $(0,-1)$ & $(1,-1)$ \\
 	\hline
 	$u_3,u_4, \cdots , u_{m-1}$ & $(1,1)$  & $(0,-1)$  & $(1,-1)$ & $(1,0)$  \\
 	\hline
 	$u_m$& $(0,1)$ & $(1,-1)$  & $(1,0)$  & $(1,1)$  \\
 	\hline
 \end{tabular}
 \caption{A $2$ - positive function for $C_m^-\Box C_n^-$ for $n,m>3$}
 \label{table:1}
 \end{table}
\begin{table}[H]
	\centering
	\begin{tabular}{|c|c|c|c|}
		\hline
		$\zeta_2((u_i,v_j))$& $v_1$ & $v_2,v_3, \cdots , v_{n-1}$  & $v_n$  \\
		\hline
		$u_1$& $(-1,-1,1)$ & $(1,-1,-1)$  &  $(-1,-1,-1)$\\
		\hline
		$u_2$&  $(1,-1,-1)$ &  $(1,1,1)$& $(1,0,0)$ \\
		\hline
		$u_3$& $(-1,-1,-1)$ & $(1,0,0)$ & $(1,-1,-1)$  \\
		\hline
	\end{tabular}
	 \caption{A $3$ - positive function for $C_3^-\Box C_n^-$ for $n>3$}
	\label{table:2}
\end{table}
\begin{table}[H]
	\centering
	\begin{tabular}{|c|c|c|c|}
	\hline
	$\zeta_3((u_i,v_j))$& $v_1$ & $v_2$ & $v_3$ \\
	\hline
	$u_1$& $(-1,-1,1)$ & $(1,-1,-1)$ & $(-1,-1,-1)$   \\
	\hline
	$u_2,u_3, \cdots , u_{m-1}$& $(1,-1,-1)$ &  $(1,1,1)$& $(1,0,0)$ \\
	\hline
	$u_m$& $(-1,-1,-1)$ &  $(1,0,0)$ & $(1,-1,-1)$ \\
	\hline
	\end{tabular}
	\caption{A $3$ - positive function for $C_m^-\Box C_3^-$ for $m>3$}
	\label{table:3}
\end{table}
\begin{table}[H]
	\centering
	\begin{tabular}{|c|c|c|c|}
		\hline
		$\zeta_4((u_i,v_j))$& $v_1$ & $v_2$ & $v_3$ \\
		\hline
		$u_1$& $(-1,-1,1)$ & $(1,-1,-1)$ & $(-1,-1,-1)$   \\
		\hline
		$u_2$& $(1,-1,-1)$ &  $(1,1,1)$& $(1,0,0)$ \\
		\hline
		$u_3$& $(-1,-1,-1)$ &  $(1,0,0)$ & $(1,-1,-1)$ \\
		\hline
	\end{tabular}
	\caption{A $3$ - positive function for $C_3^-\Box C_3^-$}
	\label{table:4}
\end{table}
Next, we consider antibalanced signed complete graphs. We denote the antibalanced signed complete graph on $n$ vertices by $K_n^-$, and the balancing dimension of $K_n^-$ is defined as $\bar{\nu}(n)$ \cite{sa1}.
 \begin{prop}
	Let $K_m^-$ and $K_n^-$ be antibalanced signed complete graphs of order $m$ and $n$ respectively,   and let $K_m^- \Box K_n^-$ be their Cartesian product. Then
	$$\bdim(K_m^-\Box K_n^-)=
	\begin{cases}
		\bar\nu(m), \mbox{  if $m\geq n$}\\
		\bar\nu(n), \mbox{  if $m\leq n$}.
	\end{cases}$$
	where, $\bar\nu(m)=\bdim(K_m^-)$.
\end{prop}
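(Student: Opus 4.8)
The plan is to prove the two-sided bound $\bdim(K_m^-\Box K_n^-)=\bar\nu(\max\{m,n\})$, which matches the claimed piecewise formula once one records that $\bar\nu$ is non-decreasing: since the all-negative $K_n$ is an induced subgraph of the all-negative $K_m$ whenever $n\le m$, the subgraph-monotonicity of balancing dimension gives $\bar\nu(n)\le\bar\nu(m)$, so $\max\{\bar\nu(m),\bar\nu(n)\}=\bar\nu(\max\{m,n\})$. Using the symmetry $K_m^-\Box K_n^-\simeq K_n^-\Box K_m^-$, I may assume $m\ge n$ and show $\bdim(K_m^-\Box K_n^-)=\bar\nu(m)$.

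For the lower bound I would observe that fixing the second coordinate $v_j$ yields an induced copy of $K_m^-$ inside $K_m^-\Box K_n^-$, since the edges $(u_i,v_j)(u_k,v_j)$ carry exactly the signature $\sigma_1(u_iu_k)$. Hence, by subgraph-monotonicity of the balancing dimension, $\bdim(K_m^-\Box K_n^-)\ge\bdim(K_m^-)=\bar\nu(m)$.

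For the upper bound I would first use switching invariance to simplify the data. As $K_m^-$ and $K_n^-$ are antibalanced, each is $1$-switching equivalent to its all-negative representative; by the theorem of \cite{dl} the two products are then switching equivalent, and since balancing dimension is $1$-switching invariant I may assume both factors are all-negative. Put $k=\bar\nu(m)$ and let $a_0,a_1,\dots,a_{m-1}\in\Omega^k$ be a $k$-positive function for the all-negative $K_m$, so that $\langle a_p,a_q\rangle<0$ for all $p\ne q$ (in particular the $a_p$ are distinct). Reading the vertex indices modulo $m$, the key construction is the \emph{cyclic} assignment
$$\zeta\big((u_i,v_j)\big)=a_{(i+j)\bmod m},\qquad 0\le i\le m-1,\ 0\le j\le n-1.$$
I would then verify that $\zeta$ is $k$-positive. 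On a horizontal edge $(u_i,v_j)(u_k,v_j)$ with $i\ne k$ the indices $i+j$ and $k+j$ are distinct modulo $m$, so $\langle\zeta((u_i,v_j)),\zeta((u_k,v_j))\rangle<0$, and multiplying by $\sigma_1=-1$ gives $+1$; on a vertical edge $(u_i,v_j)(u_i,v_l)$ with $j\ne l$ the indices $i+j$ and $i+l$ are again distinct modulo $m$, precisely because $n\le m$, giving $+1$ by the same computation. Thus $\zeta$ switches $K_m^-\Box K_n^-$ to all-positive, so $\bdim(K_m^-\Box K_n^-)\le k=\bar\nu(m)$, which together with the lower bound settles the case $m\ge n$; the case $m\le n$ is symmetric.

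The substantive step, and the one I expect to be the crux, is recognising that a single family of $m$ pairwise negatively correlated vectors can be re-used along both directions of the grid without enlarging the dimension. The shift $i+j\bmod m$ is exactly a cyclic Latin-square pattern: every row (a copy of $K_m$) receives all $m$ vectors, and every column (a copy of $K_n$, with $n\le m$) receives $n$ distinct vectors, so that all inner products forced to be negative genuinely are. Once this pattern is fixed the verification is routine, and the only place the hypothesis $m\ge n$ enters is in keeping the column indices distinct modulo $m$.
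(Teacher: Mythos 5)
Your proof is correct and follows essentially the same route as the paper: the lower bound comes from the induced copy of $K_m^-$, and your cyclic assignment $\zeta((u_i,v_j))=a_{(i+j)\bmod m}$ is exactly the shifted pattern the paper records in its Table~\ref{table:5}. You merely spell out the verification (and the monotonicity of $\bar\nu$) that the paper leaves implicit.
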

\begin{proof}
By adequate $1$-switching, we can consider  $K_m^-$ and $K_n^-$ as all-negative. Then, the Cartesian product $K_m^-\Box K_n^-$ is also all-negative.

Without loss of generality, assume that $m\geq n$. Suppose $\bdim(K_m^-)=k$ and let $\zeta':V(K_m^-)\rightarrow\Omega^k$ be the $k$ - positive function. Since $K_m^-$ is a subgraph of $K_m^-\Box K_n^-$, we have  $\bdim(K_m^-\Box K_n^-)\geq k$. Now, the function $\zeta:V(K_m^-\Box K_n^-)\rightarrow\Omega^k$ given in Table~\ref{table:5} switches $K_m^-\Box K_n^-$ to all-positive. Hence, $\bdim(K_m^-\Box K_n^-)=k=\bar{\nu}(m)$.

Similar is the proof of the next part.
\end{proof}
\begin{table}[H]
	\centering\begin{tabular}{|c|c|c|c|c|c|}
		\hline
		$\zeta((u_i,v_j))$& $v_1$ & $v_2$ & $v_3$  & $\cdots$ & $v_n$ \\
		\hline
		$u_1$&$\zeta_1(u_1)$  & $\zeta_1(u_2)$ & $\zeta_1(u_3)$ & $\cdots$ & $\zeta_1(u_n)$ \\
		\hline
		$u_2$& $\zeta_1(u_2)$ & $\zeta_1(u_3)$ & $\zeta_1(u_4)$ & $\cdots$ & $\zeta_1(u_{n+1})$ \\
		\hline
		$u_3$& $\zeta_1(u_3)$ & $\zeta_1(u_4)$ & $\zeta_1(u_5)$ & $\cdots$ &  $\zeta_1(u_{n+2})$\\
		\hline
		\vdots & \vdots & \vdots & \vdots & \vdots \vdots \vdots & \vdots \\
		\hline
		$u_m$& $\zeta_1(u_m)$ & $\zeta_1(u_1)$ & $\zeta_1(u_2)$ & $\cdots$  & $\zeta_1(u_{n-1})$ \\
		\hline
	\end{tabular}
	\caption{A $k$ - positive function for $K_m^-\Box K_n^-$}
	\label{table:5}
\end{table}
\begin{cor}
	For any antibalanced signed graph $\Sigma$ on $n$ vertices, $\bdim(\Sigma \Box K_n^-)=\bdim(K_n^-)$.
\end{cor}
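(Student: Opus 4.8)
The plan is to sandwich $\bdim(\Sigma\Box K_n^-)$ between $\bdim(K_n^-)$ from below and from above, relying on the two monotonicity principles already recorded in the introduction: balancing dimension is $1$-switching invariant, and the balancing dimension of a signed subgraph cannot exceed that of the ambient signed graph.

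First I would settle the lower bound, which is immediate. Fix any vertex $u$ of $\Sigma$ and consider the layer $\{u\}\times V(K_n^-)$ in $\Sigma\Box K_n^-$. Because the Cartesian product signature assigns $\sigma_2(v_jv_l)$ to each edge of the form $(u,v_j)(u,v_l)$, and $K_n^-$ is complete, this layer carries exactly the signed graph $K_n^-$ as a signed subgraph. Subgraph monotonicity then gives $\bdim(\Sigma\Box K_n^-)\geq\bdim(K_n^-)$.

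For the upper bound I would first normalize the factor $\Sigma$. Since $\Sigma$ is antibalanced on $n$ vertices, it is switching equivalent to an all-negative signed graph $\Sigma'$ on the same vertex set. Invoking the theorem that the Cartesian product respects switching equivalence of its factors, we obtain $\Sigma\Box K_n^-\simeq\Sigma'\Box K_n^-$, and by $1$-switching invariance $\bdim(\Sigma\Box K_n^-)=\bdim(\Sigma'\Box K_n^-)$. Now both $\Sigma'$ and $K_n^-$ are all-negative on $n$ vertices, so $\Sigma'$ is a sign-for-sign signed subgraph of $K_n^-$; consequently $\Sigma'\Box K_n^-$ is a signed subgraph of $K_n^-\Box K_n^-$, since the two edge types of the product inherit the matching signs $\sigma_{\Sigma'}=\sigma_{K_n^-}=-1$ and $\sigma_2$ respectively. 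Applying the previous proposition with $m=n$ yields $\bdim(K_n^-\Box K_n^-)=\bar\nu(n)=\bdim(K_n^-)$, and subgraph monotonicity then gives $\bdim(\Sigma'\Box K_n^-)\leq\bdim(K_n^-)$. Combining the two bounds produces the claimed equality.

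The only delicate point I anticipate is the bookkeeping around the switching step: I must verify that after switching $\Sigma$ to the all-negative $\Sigma'$, the induced signs on $\Sigma'\Box K_n^-$ genuinely coincide with those of the corresponding subgraph of $K_n^-\Box K_n^-$, so that the subgraph-monotonicity inequality is legitimate. This is precisely where the antibalancedness hypothesis is essential—without it $\Sigma$ need not embed, even up to switching, into $K_n^-$, and the upper bound argument would break down.
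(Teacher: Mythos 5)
Your proof is correct and matches the argument the paper intends: the corollary is stated there without proof precisely because it follows from the preceding proposition by the sandwich you describe (the $K_n^-$ layer gives the lower bound, and embedding the switched product into $K_n^-\Box K_n^-$ gives the upper bound $\bar\nu(n)$). The only cosmetic point is that you should also normalize the factor $K_n^-$ itself to the all-negative $-K_n$ before claiming the sign-for-sign containment, since $K_n^-$ is antibalanced rather than literally all-negative; the same switching-equivalence theorem for the Cartesian product that you already invoke licenses this.
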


%
\subsection{Balancing dimension of the lexicographic product}
We now focus on the lexicographic product (also called composition) of signed graphs. Two definitions for the lexicographic product  of signed graphs are available in the literature. We call the definition given by Hameed et al.~\cite{sg2} the \textit{HG-lexicographic product} and the definition given by Brunetti et al.~\cite{bcd} the \textit{BCD-lexicographic product}.
\begin{defn}\cite{sg2}
	The \textit{HG-lexicographic product} of two signed graphs $\Sigma_1=(G_1,\sigma_1)$ and $\Sigma_2=(G_2,\sigma_2)$ is the signed graph  whose underlying graph is the lexicographic product of underlying unsigned graphs and whose signature function $\sigma$ for the labeling of the edges is defined by
	$$\sigma((u_i,v_j)(u_k,v_l))=
	\begin{cases}
		\sigma_1(u_iu_k)  & \mbox{if } i\neq k, \\
		\sigma_2(v_jv_l) & \mbox{if } i=k.
	\end{cases}
	$$

	We denote the \textit{HG-lexicographic product} of $\Sigma_1$ and $\Sigma_2$ by $\Sigma_1[\Sigma_2]$.
\end{defn}

\begin{defn}\cite{bcd}
	The \textit{BCD-lexicographic product} of two signed graphs $\Sigma_1=(G_1,\sigma_1)$ and $\Sigma_2=(G_2,\sigma_2)$ as the signed graph whose underlying graph is the lexicographic product of underlying unsigned graphs and whose signature function $\sigma$ for the labeling of the edges is defined by
	$$\sigma((u_i,v_j)(u_k,v_l))=
	\left\{
	\begin{array}{ll}
		\sigma_1(u_iu_k)  & \mbox{if } u_i \sim u_k\  \mbox{and } v_j\nsim v_l, \\
		\sigma_1(u_iu_k)\sigma_2(v_jv_l)  & \mbox{if } u_i \sim u_k\  \mbox{and } v_j\sim v_l, \\
		\sigma_2(v_jv_l)  & \mbox{if } u_i=u_k\  \mbox{and } v_j\sim v_l. \\
	\end{array}
	\right.$$

	We denote the \textit{BCD-lexicographic product} of $\Sigma_1$ and $\Sigma_2$ by $\Sigma_1*\Sigma_2$.
\end{defn}
The \textit{HG-lexicographic product} and \textit{BCD-lexicographic product}  of two balanced signed graphs need not be balanced. However, a criterion for the balance of \textit{HG-lexicographic product} of two signed graphs is proved in  \cite{sg2}
.
\begin{thm}\cite{sg2}\label{lexchar}
	If $\Sigma_1$ and $\Sigma_2$ are two signed graphs with at least one edge for each, then their HG-lexicographic product  is balanced if and only if $\Sigma_1$ is balanced and $\Sigma_2$ is all-positive.
\end{thm}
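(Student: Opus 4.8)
The plan is to prove both implications directly: a lifted switching function handles the ``if'' direction, and a single well-chosen short cycle is the engine behind the ``only if'' direction.

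For sufficiency, assume $\Sigma_1$ is balanced and $\Sigma_2$ is all-positive. I would first use the balance of $\Sigma_1$ to fix a $1$-switching function $\zeta_1:V(G_1)\to\{-1,1\}$ with $\sigma_1^{\zeta_1}\equiv +1$, and then lift it to the product by setting $\zeta((u_i,v_j))=\zeta_1(u_i)$, simply ignoring the second coordinate. A direct check on the two edge types closes this direction: for an edge with $i\neq k$ one gets $\zeta_1(u_i)\sigma_1(u_iu_k)\zeta_1(u_k)=\sigma_1^{\zeta_1}(u_iu_k)=+1$, and for an edge with $i=k$ one gets $\zeta_1(u_i)^2\sigma_2(v_jv_l)=\sigma_2(v_jv_l)=+1$ since $\Sigma_2$ is all-positive. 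Hence $\zeta$ switches $\Sigma_1[\Sigma_2]$ to all-positive, so the product is balanced.

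For necessity, assume $\Sigma_1[\Sigma_2]$ is balanced. Fixing any $v_j\in V(G_2)$, the set $\{(u_i,v_j):u_i\in V(G_1)\}$ together with the edges carrying the signature $\sigma_1$ is a copy of $\Sigma_1$ inside the product; since any subgraph of a balanced signed graph is balanced, $\Sigma_1$ is balanced. The remaining task is to show $\Sigma_2$ is \emph{all-positive}, and I expect this to be the main obstacle, because the same subgraph argument applied to a fiber over a fixed $u_i$ only yields that $\Sigma_2$ is balanced, which is strictly weaker than all-positive.

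The key idea to overcome this is to produce, for each edge $v_jv_l$ of $\Sigma_2$, a cycle in $\Sigma_1[\Sigma_2]$ whose sign equals $\sigma_2(v_jv_l)$; here is where the hypothesis that $\Sigma_1$ has an edge $u_1u_2$ is used. I would consider the triangle on $(u_1,v_j),(u_2,v_j),(u_2,v_l)$, whose three edges carry signs $\sigma_1(u_1u_2)$, $\sigma_2(v_jv_l)$, and $\sigma_1(u_1u_2)$ respectively (the last edge joins vertices differing in the first coordinate, hence uses $\sigma_1$). Thus the cycle sign is $\sigma_1(u_1u_2)^2\,\sigma_2(v_jv_l)=\sigma_2(v_jv_l)$, and balance of the product forces it to be positive, giving $\sigma_2(v_jv_l)=+1$. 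As $v_jv_l$ was arbitrary, $\Sigma_2$ is all-positive. This also clarifies the role of the edge hypotheses: without an edge in $\Sigma_1$ the product decomposes into disjoint copies of $\Sigma_2$, and balance would no longer force $\Sigma_2$ to be all-positive.
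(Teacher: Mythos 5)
Your proof is correct. Note that the paper does not prove this statement itself---it is quoted from the cited reference \cite{sg2}---so there is no in-paper argument to compare against; but both directions of your argument are sound: the lifted switching function $\zeta((u_i,v_j))=\zeta_1(u_i)$ verifies sufficiency on both edge types, and the triangle $(u_1,v_j)(u_2,v_j)(u_2,v_l)$ with sign $\sigma_1(u_1u_2)^2\sigma_2(v_jv_l)=\sigma_2(v_jv_l)$ correctly forces $\Sigma_2$ to be all-positive. Indeed, that triangle is exactly the configuration the paper itself exploits later (in Theorem~\ref{hggeq3}) to show $\bdim(\Sigma_1[\Sigma_2])\geq 3$ when $\Sigma_2$ has a negative edge, so your necessity argument is fully in the spirit of the paper.
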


\begin{thm}\label{hggeq3}
	Let $\Sigma_1$ and $\Sigma_2$ are two signed graphs, with $\Sigma_2$ having at least one negative edge. Then $\bdim(\Sigma_1[\Sigma_2]) \geq 3$.
\end{thm}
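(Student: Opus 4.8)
The plan is to locate a negative triangle inside $\Sigma_1[\Sigma_2]$ and then invoke Theorem~\ref{negtriangle}, which guarantees $\bdim\geq 3$ whenever a negative triangle is present. Under the standing assumption that all signed graphs are connected, $\Sigma_1$ has at least two vertices and hence carries at least one edge, say $u_iu_k\in E(G_1)$ with sign $\sigma_1(u_iu_k)\in\{-1,+1\}$. By hypothesis $\Sigma_2$ has a negative edge $v_jv_l\in E(G_2)$, so $\sigma_2(v_jv_l)=-1$.

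I would then consider the three product vertices $(u_i,v_j)$, $(u_i,v_l)$, and $(u_k,v_j)$. Using the adjacency rule of the lexicographic product---an edge occurs when the first coordinates are adjacent, or when they coincide and the second coordinates are adjacent---one checks that these three vertices are pairwise adjacent: $(u_i,v_j)(u_i,v_l)$ is an edge because the first coordinates agree and $v_j\sim v_l$, while both $(u_i,v_j)(u_k,v_j)$ and $(u_i,v_l)(u_k,v_j)$ are edges because $u_i\sim u_k$ (the second coordinates being irrelevant in this case). Thus these vertices span a triangle $T$ in $\Sigma_1[\Sigma_2]$.

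Next I would compute the sign of $T$ directly from the HG-signature. The edge lying in a single fibre, $(u_i,v_j)(u_i,v_l)$, receives sign $\sigma_2(v_jv_l)=-1$, whereas the two cross-fibre edges $(u_i,v_j)(u_k,v_j)$ and $(u_i,v_l)(u_k,v_j)$ each receive sign $\sigma_1(u_iu_k)$. Hence the product of the three edge signs is $(-1)\,\sigma_1(u_iu_k)^2=-1$, so $T$ is a negative triangle. Applying Theorem~\ref{negtriangle} then yields $\bdim(\Sigma_1[\Sigma_2])\geq 3$, as required.

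There is no serious obstacle here: the argument is essentially careful bookkeeping of which of the signature cases each of the three edges falls into, exploiting the fact that a cross-fibre vertex $(u_k,\cdot)$ with $u_k\sim u_i$ is adjacent to \emph{every} vertex of the $u_i$-fibre. The only point that requires attention is the degenerate situation in which $\Sigma_1$ has no edge (a single-vertex $\Sigma_1$), where $\Sigma_1[\Sigma_2]$ collapses to $\Sigma_2$ and the conclusion can fail---for instance when $\Sigma_2$ is a lone negative edge, so that $\bdim(\Sigma_2)=1$. The standing connectedness hypothesis, together with the implicit assumption that $\Sigma_1$ is nontrivial, is precisely what rules this case out and makes the cross-fibre vertex $(u_k,v_j)$ available.
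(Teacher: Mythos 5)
Your proposal is correct and follows exactly the paper's argument: both exhibit the triangle on $(u_i,v_j)$, $(u_i,v_l)$, $(u_k,v_j)$ built from an edge $u_iu_k$ of $\Sigma_1$ and a negative edge $v_jv_l$ of $\Sigma_2$, verify it is negative since the two cross-fibre edges contribute $\sigma_1(u_iu_k)^2=+1$, and invoke Theorem~\ref{negtriangle}. Your explicit sign bookkeeping and the remark about the degenerate edgeless $\Sigma_1$ are slightly more careful than the paper's one-line proof, but the route is the same.
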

\begin{proof}
	Let $v_jv_{j+1}$ be a negative edge of $\Sigma_2$ Then for any edge $u_iu_{i+1}$ of $\Sigma_1$,  $(u_i,v_j)(u_i,v_{j+1})(u_{i+1},v_j)(u_i,v_j)$ forms a negative triangle in $\Sigma_1[\Sigma_2]$. Hence, by Theorem~\ref{negtriangle}, $\bdim(\Sigma_1[\Sigma_2]) \geq 3$.
\end{proof}
\begin{prop}\label{hgnk}
	For any signed graph $\Sigma$, $\bdim(N_k[\Sigma])=\bdim(\Sigma[N_k])=\bdim(\Sigma)$, where $N_k$ is the graph on $k$ vertices without edges.
\end{prop}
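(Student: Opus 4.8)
The plan is to prove the two equalities separately, by first recognising the combinatorial structure of each product with $N_k$, then matching a subgraph lower bound with an explicit positive switching function built by copying or lifting an optimal switching function for $\Sigma$. Throughout I would write $d=\bdim(\Sigma)$ and fix a $d$-positive function $\zeta_0:V(\Sigma)\to\Omega^d$.

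For the equality $\bdim(N_k[\Sigma])=\bdim(\Sigma)$, I would first observe that since $N_k$ has no edges, an edge $(u_i,v_j)(u_k,v_l)$ of $N_k[\Sigma]$ can only arise from the case $i=k$, where its sign is $\sigma_2(v_jv_l)$. Hence $N_k[\Sigma]$ is the disjoint union of $k$ copies of $\Sigma$, the $i$-th copy being the set $\{(u_i,v_j):v_j\in V(\Sigma)\}$, and there are no edges between distinct copies. Since one such copy is isomorphic to $\Sigma$, the monotonicity of balancing dimension under subgraphs gives $\bdim(N_k[\Sigma])\ge d$. For the reverse inequality I would define $\zeta((u_i,v_j))=\zeta_0(v_j)$ on every copy; every edge then lies inside a single copy and is switched exactly as in $\Sigma$, hence to $+1$, with no cross-copy edges to check. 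Thus $\bdim(N_k[\Sigma])\le d$ and equality follows.

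For the equality $\bdim(\Sigma[N_k])=\bdim(\Sigma)$, I would note the dual ``blow-up'' structure: since $N_k$ has no edges, the only edges of $\Sigma[N_k]$ are $(u_i,v_j)(u_k,v_l)$ with $u_i\sim u_k$ in $\Sigma$, each carrying the sign $\sigma_1(u_iu_k)$. So $\Sigma[N_k]$ replaces each vertex $u_i$ by an independent set $\{(u_i,v_1),\dots,(u_i,v_k)\}$ and joins the full blow-up of $u_i$ to that of $u_k$ with sign $\sigma_1(u_iu_k)$ whenever $u_i\sim u_k$. The induced subgraph on $\{(u_i,v_1):u_i\in V(\Sigma)\}$ is isomorphic to $\Sigma$, giving $\bdim(\Sigma[N_k])\ge d$. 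For the upper bound I would set $\zeta((u_i,v_j))=\zeta_0(u_i)$ for all $j$; then any edge inherits the inner product $\langle\zeta_0(u_i),\zeta_0(u_k)\rangle\neq0$ and switched sign $\sigma_1(u_iu_k)\,\sgn\langle\zeta_0(u_i),\zeta_0(u_k)\rangle=\sigma_1^{\zeta_0}(u_iu_k)=+1$. Hence $\bdim(\Sigma[N_k])\le d$, and both equalities are established.

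Both directions are elementary once the products are identified as, respectively, a disjoint union of copies of $\Sigma$ and a sign-preserving blow-up of $\Sigma$; there is no genuine obstacle beyond verifying that the copied or lifted function $\zeta_0$ keeps every required inner product nonzero (which it does verbatim, since each product edge sits directly over an edge of $\Sigma$) and invoking subgraph monotonicity for the matching lower bounds.
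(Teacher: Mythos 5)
Your proposal is correct and follows essentially the same route as the paper: the same lifted switching functions $\zeta((u_i,v_j))=\zeta_0(v_j)$ for $N_k[\Sigma]$ and $\zeta((u_i,v_j))=\zeta_0(u_i)$ for $\Sigma[N_k]$, combined with subgraph monotonicity for the lower bounds. Your explicit identification of $N_k[\Sigma]$ as a disjoint union of copies of $\Sigma$ and of $\Sigma[N_k]$ as a sign-preserving blow-up is a helpful clarification the paper leaves implicit, but it does not change the argument.
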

\begin{proof}
	Suppose $\bdim(\Sigma)=k$ and $\zeta:V(\Sigma)\rightarrow\Omega^k$ be the $k$-positive function. Then,  $\zeta':V(N_k[\Sigma])\rightarrow\Omega^k$ defined by $\zeta'((w_i,u_j))=\zeta(u_j)$ for $1\leq i \leq k$, $1\leq j \leq |V(\Sigma)|$ switches $N_k[\Sigma]$ to all-positive. Hence,  $\bdim(N_k[\Sigma])\leq k$. However, since $\Sigma$ is a subgraph of $N_k[\Sigma]$, we must have $\bdim(N_k[\Sigma])=k=\bdim(\Sigma)$.

	Similarly, $\zeta'':V(\Sigma[N_k])\rightarrow\Omega^k$ defined by $\zeta''((u_i,w_j))=\zeta(u_i)$ for $1\leq i \leq |V(\Sigma)|$, $1\leq j \leq k$ switches $\Sigma[N_k]$ to all-positive, and hence $\bdim(\Sigma[N_k])=k$.
\end{proof}
	\begin{rmk}
		The above results shows that, even though the lexicographic product is not commutative, there exists signed graphs satisfying $\bdim(\Sigma_1[\Sigma_2])=\bdim(\Sigma_2[\Sigma_1])$. However, in general, $\bdim(\Sigma_1[\Sigma_2])\neq\bdim(\Sigma_2[\Sigma_1])$. As an example, consider $\Sigma_1$ as the balanced triangle having two negative edges and $\Sigma_2$ as the positive $K_2$. Then, Theorem~\ref{lexchar} and Theorem~\ref{hggeq3} respectively shows that  $\bdim(\Sigma_1[\Sigma_2])=1$ and $\bdim(\Sigma_2[\Sigma_1])\geq 3$.
	\end{rmk}

\begin{thm}\label{hglexswitcheq}
	Let $\Sigma_1$ and $\Sigma_2$ be two signed graphs and let $\Sigma_1[\Sigma_2]$ be their HG - lexicographic product. If $\Sigma_1 \sim \Sigma_1'$, then $\Sigma_1[\Sigma_2]\sim\Sigma_1'[\Sigma_2]$.
\end{thm}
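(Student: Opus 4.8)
The plan is to lift the $1$-switching that relates $\Sigma_1$ and $\Sigma_1'$ to the whole product by simply ignoring the second coordinate. Since $\Sigma_1 \sim \Sigma_1'$, there is a switching function $\zeta_1 : V(\Sigma_1) \to \{-1,1\}$ with $\Sigma_1' = \Sigma_1^{\zeta_1}$, that is, $\sigma_1'(u_iu_k) = \zeta_1(u_i)\,\sigma_1(u_iu_k)\,\zeta_1(u_k)$ for every edge $u_iu_k$ of $\Sigma_1$. First I would define $\eta : V(\Sigma_1[\Sigma_2]) \to \{-1,1\}$ by $\eta((u_i,v_j)) = \zeta_1(u_i)$, depending only on the first coordinate, and claim that $\eta$ switches $\Sigma_1[\Sigma_2]$ to $\Sigma_1'[\Sigma_2]$.

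The verification splits along the two edge types of the HG-lexicographic product. Writing $\sigma$ and $\sigma'$ for the signatures of $\Sigma_1[\Sigma_2]$ and $\Sigma_1'[\Sigma_2]$, I would check that $\sigma^\eta(e) = \sigma'(e)$ for each edge $e=(u_i,v_j)(u_k,v_l)$. For the edges with $i \neq k$, the sign is governed by $\Sigma_1$, and
$$\sigma^\eta(e) = \eta((u_i,v_j))\,\sigma(e)\,\eta((u_k,v_l)) = \zeta_1(u_i)\,\sigma_1(u_iu_k)\,\zeta_1(u_k) = \sigma_1'(u_iu_k) = \sigma'(e).$$
For the edges with $i = k$, the sign is governed by $\Sigma_2$, which appears identically in both products, and the two switching factors collapse:
$$\sigma^\eta(e) = \zeta_1(u_i)\,\sigma_2(v_jv_l)\,\zeta_1(u_i) = (\zeta_1(u_i))^2\,\sigma_2(v_jv_l) = \sigma_2(v_jv_l) = \sigma'(e).$$
Together these two computations give $(\Sigma_1[\Sigma_2])^\eta = \Sigma_1'[\Sigma_2]$, so the two products are switching equivalent.

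There is no genuine obstacle here; the only point that requires a moment's care is recognizing why the construction works \emph{asymmetrically}. The ``$\Sigma_2$-edges'' join vertices sharing the same first coordinate, so they receive the factor $\zeta_1(u_i)^2 = 1$ and are left untouched, which is exactly what is needed since $\Sigma_2$ is unchanged between the two products; meanwhile the ``$\Sigma_1$-edges'' transform precisely as the edges of $\Sigma_1$ do under $\zeta_1$. This is also the reason the analogous statement cannot be expected to hold for switching of the second factor: a switching of $\Sigma_2$ would have to be applied consistently across all copies, but the inter-copy edges carry signs of $\Sigma_1$ and would not cooperate. I would close by noting that $\eta$ is an ordinary $\{-1,1\}$-valued switching function, so the conclusion is literal switching equivalence, consistent with the fact that balancing dimension is $1$-switching invariant.
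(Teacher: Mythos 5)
Your proof is correct and is essentially identical to the paper's: both lift the switching function of $\Sigma_1$ to the product via the first coordinate and verify the two edge types ($i\neq k$ governed by $\sigma_1$, $i=k$ governed by $\sigma_2$, where the factor $\zeta_1(u_i)^2=1$ collapses). No differences worth noting.
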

\begin{proof}
	Let $\sigma_1$, $\sigma_1'$, $\sigma_2$, $\sigma$ and $\sigma'$ denote the signatures of $\Sigma_1$, $\Sigma_1'$, $\Sigma_2$, $\Sigma_1[\Sigma_2]$ and $\Sigma_1'[\Sigma_2]$ respectively. Since $\Sigma_1 \sim \Sigma_1'$, there exist a switching function $\eta:V(\Sigma_1)\rightarrow\{-1,1\}$ such that $\Sigma_1^\eta=\Sigma_1'$. Define the map $\eta':V(\Sigma_1[\Sigma_2])\rightarrow\{-1,1\}$ as $\eta'((u_i,v_j))=\eta(u_i)$ for $1\leq i \leq |V(\Sigma_1)|$ and $1\leq j \leq |V(\Sigma_2)|$. Then, for any edge $(u_i,v_j)(u_k,v_l)$ in $\Sigma_1[\Sigma_2]$, we have,
	\begin{align*}
		\sigma^{\eta'}((u_i,v_j)(u_k,v_l))
		&=\eta'((u_i,v_j))\sigma((u_i,v_j)(u_k,v_l))\eta'((u_k,v_l))\\
		&=\eta(u_i)\sigma((u_i,v_j)(u_k,v_l))\eta(u_k)\\
		&=\begin{cases}
				\eta(u_i)\sigma_1(u_iu_k)\eta(u_k)  & \mbox{if } i\neq k, \\
			\eta(u_k)\sigma_2(v_jv_l)\eta(u_k) & \mbox{if } i=k.
		\end{cases}\\
		&=\begin{cases}
			\sigma_1^\eta(u_iu_k)  & \mbox{if } i\neq k, \\
			\sigma_2(v_jv_l) & \mbox{if } i=k.
		\end{cases}\\
		&=\begin{cases}
			\sigma_1'(u_iu_k)  & \mbox{if } i\neq k, \\
			\sigma_2(v_jv_l) & \mbox{if } i=k.
		\end{cases}\\
		&=	\sigma'((u_i,v_j)(u_k,v_l)).
	\end{align*}
Thus,$(\Sigma_1[\Sigma_2])^{\eta'}=\Sigma_1'[\Sigma_2]$ and hence, $\Sigma_1[\Sigma_2] \sim \Sigma_1'[\Sigma_2]$.
\end{proof}

Since balancing dimension is $1$-switching  invariant, we have the following result.
\begin{cor}\label{hgc1}
	If $\Sigma_1$ and $\Sigma_2$ are any two signed graphs and if $\Sigma_1\sim\Sigma_1'$, then $\bdim(\Sigma_1[\Sigma_2])=\bdim(\Sigma_1'[\Sigma_2])$.
\end{cor}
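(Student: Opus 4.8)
The plan is to chain together two facts that are already available in the excerpt. First I would invoke Theorem~\ref{hglexswitcheq}, which guarantees that whenever $\Sigma_1 \sim \Sigma_1'$, the two HG-lexicographic products are switching equivalent, that is, $\Sigma_1[\Sigma_2] \sim \Sigma_1'[\Sigma_2]$. This is the substantive input, and its proof (given just above) already exhibits the required lifted $1$-switching function $\eta'((u_i,v_j)) = \eta(u_i)$ that realizes the equivalence.

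Second, I would recall the statement noted in the introduction that the balancing dimension is $1$-switching invariant (see~\cite{sa1}). Applying this invariance to the switching-equivalent pair $\Sigma_1[\Sigma_2]$ and $\Sigma_1'[\Sigma_2]$ immediately gives $\bdim(\Sigma_1[\Sigma_2]) = \bdim(\Sigma_1'[\Sigma_2])$, which is exactly the claim.

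Since both ingredients are already established, there is essentially no obstacle here: the corollary is a one-line consequence of Theorem~\ref{hglexswitcheq} followed by the cited invariance. If I wished to make the argument fully self-contained rather than citing the invariance, I could briefly re-derive it by observing that any positive $k$-switching function for $\Sigma_1'[\Sigma_2]$ can be precomposed with the $1$-switching $\eta'$ to produce a positive $k$-switching function for $\Sigma_1[\Sigma_2]$ (and symmetrically, using $(\eta')^{-1} = \eta'$ since $\eta'$ is $\{-1,1\}$-valued), so the least admissible $k$ is the same for both products; but given that the $1$-switching invariance is quoted as known, this refinement is optional.
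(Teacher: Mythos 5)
Your argument is exactly the paper's: the corollary is stated immediately after the sentence ``Since balancing dimension is $1$-switching invariant, we have the following result,'' so the intended proof is precisely Theorem~\ref{hglexswitcheq} combined with the quoted invariance, as you describe. Your optional sketch of re-deriving the invariance by composing switching functions is a harmless bonus; the proposal is correct and matches the paper.
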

\begin{cor}\label{hgc2}
	If $\Sigma_1$ is antibalanced and $\Sigma_2$ is all-negative, then $\Sigma_1[\Sigma_2]$ is antibalanced.
\end{cor}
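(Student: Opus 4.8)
The plan is to reduce to the case where both factors are all-negative and then observe that an all-negative signed graph is antibalanced. First I would record the auxiliary fact that \emph{antibalance is a $1$-switching invariant}. Indeed, if $\Sigma\sim\Sigma'$ via some $\eta:V\to\{-1,1\}$, then negating every sign commutes with the switching, since $(-\sigma)^\eta(uv)=\eta(u)(-\sigma(uv))\eta(v)=-\sigma^\eta(uv)$, so $-\Sigma\sim-\Sigma'$ via the same $\eta$. Because balance is preserved by switching, $-\Sigma$ is balanced if and only if $-\Sigma'$ is; that is, $\Sigma$ is antibalanced if and only if $\Sigma'$ is.

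Next I would use the antibalance of $\Sigma_1$ to pass to an all-negative factor. As noted in the introduction, every antibalanced signed graph can be switched to an all-negative one, so there is a $1$-switching function $\eta$ with $\Sigma_1^\eta=\Sigma_1'$ all-negative. By Theorem~\ref{hglexswitcheq}, this switching lifts to the product: $\Sigma_1[\Sigma_2]\sim\Sigma_1'[\Sigma_2]$.

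It then suffices to show $\Sigma_1'[\Sigma_2]$ is antibalanced, where now both $\Sigma_1'$ and $\Sigma_2$ are all-negative. Inspecting the two cases of the HG-lexicographic signature, any edge $(u_i,v_j)(u_k,v_l)$ receives sign $\sigma_1'(u_iu_k)=-1$ when $i\neq k$ and $\sigma_2(v_jv_l)=-1$ when $i=k$. Thus $\Sigma_1'[\Sigma_2]$ is all-negative, and an all-negative signed graph is antibalanced since its negation is all-positive, hence balanced. Combining this with the invariance from the first step, the relation $\Sigma_1[\Sigma_2]\sim\Sigma_1'[\Sigma_2]$ forces $\Sigma_1[\Sigma_2]$ to be antibalanced as well.

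I expect the only genuine point requiring care to be the first step, namely verifying that antibalance (and not merely balance) survives $1$-switching, so that the conclusion about $\Sigma_1'[\Sigma_2]$ can be transported back to $\Sigma_1[\Sigma_2]$; the remainder is a direct case check on the signature with no cycle-chasing. Alternatively, one could bypass the invariance lemma entirely by defining $\eta'((u_i,v_j))=\eta(u_i)$ directly on $V(\Sigma_1[\Sigma_2])$ and computing, exactly as in the proof of Theorem~\ref{hglexswitcheq}, that $\sigma^{\eta'}\equiv-1$; this shows that $\Sigma_1[\Sigma_2]$ itself switches to all-negative and is therefore antibalanced.
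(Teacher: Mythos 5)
Your proposal is correct and follows essentially the same route as the paper: switch $\Sigma_1$ to an all-negative $\Sigma_1'$, use Theorem~\ref{hglexswitcheq} to get $\Sigma_1[\Sigma_2]\sim\Sigma_1'[\Sigma_2]$, observe that $\Sigma_1'[\Sigma_2]$ is all-negative and hence antibalanced, and transport the conclusion back. The only difference is that you explicitly verify that antibalance is a $1$-switching invariant (a point the paper leaves implicit), which is a welcome bit of extra care rather than a deviation.
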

\begin{proof}
Since $\Sigma_1$ is antibalanced, we have $\Sigma_1\sim\Sigma_1'$, where $\Sigma_1'$ is all- negative. Now, since $\Sigma_2$ is all-negative, $\Sigma_1'[\Sigma_2]$ is all-negative, and hence antibalanced. Thus, by Theorem~\ref{hglexswitcheq}, $\Sigma_1[\Sigma_2]$ is antibalanced .
\end{proof}
We now consider complete graphs. To begin with, observe that the lexicographic product of two complete graphs, say $K_m$ and $K_n$ is the complete graph $K_{mn}$. To see this, consider any two vertices $u=(u_i,v_j)$ and $v=(u_k,v_l)$ in $K_m[K_n]$. Then $u_i, u_k$ are adjacent in $K_m$ and $v_k, v_l$ are adjacent in $K_n$. Now if $u_i\neq u_k$, then  $u_i$ and $u_k$ are adjacent in $K_m$. Therefore, $u$ and $v$ are adjacent in $K_m[K_n]$. On the other hand, if $u_i = u_k$, then  since $v_k, v_l$ are adjacent in $K_n$, $u$ and $v$ are adjacent in $K_m[K_n]$. Thus, any two of the $mn$ vertices of $K_m[K_n]$ are adjacent.

The next result follows immediately from Corollary~\ref{hgc2}
\begin{prop}
	If  $\Sigma_1$ and $\Sigma_2$ denote the antibalanced signed complete graph $K_m^-$ and the all-negative signed complete graph $-K_n$ respectively, then $\bdim(\Sigma_1[\Sigma_2])=\bar\nu(mn)$, where $\bar\nu(mn)$ is the balancing dimension of the antibalanced signed complete graph $K_{mn}^-$.
\end{prop}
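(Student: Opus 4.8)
The plan is to show that $\Sigma_1[\Sigma_2]$ is, up to $1$-switching, nothing but the antibalanced signed complete graph $K_{mn}^-$, and then to invoke the $1$-switching invariance of the balancing dimension. I would split the argument into an underlying-graph part and a signature part, then glue them together.

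First I would record the underlying-graph computation already carried out in the discussion preceding the statement: the lexicographic product of the complete graphs $K_m$ and $K_n$ is the complete graph $K_{mn}$. Since $\Sigma_1=K_m^-$ and $\Sigma_2=-K_n$ have underlying graphs $K_m$ and $K_n$, the underlying graph of $\Sigma_1[\Sigma_2]$ is $K_{mn}$; that is, $\Sigma_1[\Sigma_2]$ is a signed \emph{complete} graph on $mn$ vertices. For the signature, I note that $\Sigma_1=K_m^-$ is antibalanced and $\Sigma_2=-K_n$ is all-negative, so Corollary~\ref{hgc2} applies verbatim and gives that $\Sigma_1[\Sigma_2]$ is antibalanced. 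Combining the two observations, $\Sigma_1[\Sigma_2]$ is an antibalanced signed complete graph on $mn$ vertices.

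The final step is to identify this with $K_{mn}^-$ and read off the dimension. Here I would use the standard fact that a signed graph is antibalanced exactly when it can be $1$-switched to all-negative (equivalently, $-\Sigma$ switches to all-positive); consequently every antibalanced signed complete graph on $mn$ vertices is $1$-switching equivalent to $-K_{mn}$, i.e.\ to $K_{mn}^-$. Since the balancing dimension is $1$-switching invariant, I conclude
$$\bdim(\Sigma_1[\Sigma_2])=\bdim(K_{mn}^-)=\bar\nu(mn).$$

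I do not expect a genuine computational obstacle: the underlying-graph identity and the antibalance are both already available, the latter directly from Corollary~\ref{hgc2}. The only point deserving a word of care is the last identification, namely making explicit that ``antibalanced together with complete underlying graph on $mn$ vertices'' pins down the $1$-switching class uniquely, so that $\bar\nu(mn)$ is a well-defined value for $\bdim$. This rests on the switching of antibalanced signed graphs to all-negative combined with the switching-invariance of $\bdim$, both of which are recalled in the introduction.
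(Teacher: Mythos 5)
Your proposal is correct and follows exactly the paper's route: the paper states that the result ``follows immediately from Corollary~\ref{hgc2}'' together with the preceding observation that $K_m[K_n]=K_{mn}$, and your argument simply makes explicit the same chain (completeness of the underlying graph, antibalance via Corollary~\ref{hgc2}, switching equivalence to $K_{mn}^-$, and $1$-switching invariance of $\bdim$). No gaps; your added care about the uniqueness of the switching class is a reasonable elaboration of what the paper leaves implicit.
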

\begin{thm}
	Let $\Sigma_1=(G_1,\sigma_1)$ and $\Sigma_2=(G_2,\sigma_2)$ be two signed graphs, where $\Sigma_2$ is all-positive. Then $\bdim(\Sigma_1[\Sigma_2])=\bdim(\Sigma_1)$.
\end{thm}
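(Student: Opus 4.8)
The plan is to show a two-sided inequality, proving $\bdim(\Sigma_1[\Sigma_2]) \le \bdim(\Sigma_1)$ by constructing an explicit switching function, and then $\bdim(\Sigma_1[\Sigma_2]) \ge \bdim(\Sigma_1)$ via the subgraph principle. For the lower bound, observe that $\Sigma_1$ embeds as a subgraph of $\Sigma_1[\Sigma_2]$: fixing any vertex $v_j \in V(\Sigma_2)$, the induced subgraph on $\{(u_i, v_j) : u_i \in V(\Sigma_1)\}$ is isomorphic to $\Sigma_1$ (since whenever $u_i \sim u_k$ in $\Sigma_1$, the pairs $(u_i,v_j)(u_k,v_j)$ are adjacent with sign $\sigma_1(u_iu_k)$). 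Since balancing dimension cannot increase when passing to a subgraph, $\bdim(\Sigma_1[\Sigma_2]) \ge \bdim(\Sigma_1)$.

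For the upper bound, suppose $\bdim(\Sigma_1) = k$ and let $\zeta_1 : V(\Sigma_1) \rightarrow \Omega^k$ be a $k$-positive function for $\Sigma_1$. The natural candidate is the ``lift'' $\zeta : V(\Sigma_1[\Sigma_2]) \rightarrow \Omega^k$ defined by $\zeta((u_i, v_j)) = \zeta_1(u_i)$, which ignores the second coordinate entirely. I would then verify that $\zeta$ switches $\Sigma_1[\Sigma_2]$ to all-positive by checking the two edge types in the HG-lexicographic signature. When $i \neq k$, the edge sign is $\sigma_1(u_iu_k)$ and $\langle \zeta((u_i,v_j)), \zeta((u_k,v_l))\rangle = \langle \zeta_1(u_i), \zeta_1(u_k)\rangle$, so the switched sign equals $\sigma_1(u_iu_k)\sgn(\langle \zeta_1(u_i), \zeta_1(u_k)\rangle) = \sigma_1^{\zeta_1}(u_iu_k) = +1$. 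When $i = k$, the edge sign is $\sigma_2(v_jv_l) = +1$ (since $\Sigma_2$ is all-positive) and $\langle \zeta((u_i,v_j)), \zeta((u_i,v_l))\rangle = \langle \zeta_1(u_i), \zeta_1(u_i)\rangle = \lVert \zeta_1(u_i)\rVert^2 > 0$, so the switched sign is $+1$ as well.

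The one genuine subtlety is confirming that $\zeta$ is a \emph{legal} vector valued switching function, namely that $\langle \zeta(u), \zeta(v)\rangle \neq 0$ for every edge $uv$. For the $i \neq k$ edges this follows because $\zeta_1$ is itself a valid switching function on $\Sigma_1$, so $\langle \zeta_1(u_i), \zeta_1(u_k)\rangle \neq 0$ whenever $u_iu_k \in E(\Sigma_1)$. For the $i = k$ edges the inner product is $\lVert \zeta_1(u_i)\rVert^2$, which is nonzero precisely because $\zeta_1(u_i) \neq \mathbf{0}$; this in turn holds since $u_i$ has positive degree in $\Sigma_1$ (every vertex of $\Sigma_1$ that carries a copy of $\Sigma_2$ with an edge is incident to such vertical edges, and a switching function producing a nonzero inner product must assign it a nonzero vector). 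Once legality is established, the two computations above complete the upper bound, and combining with the lower bound yields $\bdim(\Sigma_1[\Sigma_2]) = k = \bdim(\Sigma_1)$.

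I expect the main obstacle to be the edge case where $\Sigma_2$ has no edges (or a vertex of $\Sigma_1$ is isolated), so that certain $\zeta_1(u_i)$ could in principle be the zero vector. If the general definition of $\bdim$ permits zero vectors at isolated vertices, I would handle this by noting that isolated vertices of $\Sigma_1$ contribute no constraints, or by comparing with Proposition~\ref{hgnk}, which already settles the case $\Sigma_2 = N_k$. Otherwise the argument is a routine sign computation, and the result is really just the statement that lifting a positive $k$-switching function of $\Sigma_1$ through an all-positive $\Sigma_2$ preserves positivity.
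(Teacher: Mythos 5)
Your proposal is correct and follows essentially the same route as the paper: lift the $k$-positive function $\zeta_1$ of $\Sigma_1$ to $\Sigma_1[\Sigma_2]$ by $\zeta((u_i,v_j))=\zeta_1(u_i)$, verify positivity on both edge types, and invoke the subgraph bound for the reverse inequality. Your extra care about legality of $\zeta$ (nonzero inner products, in particular $\zeta_1(u_i)\neq\mathbf{0}$ on the $i=k$ edges) is a detail the paper leaves implicit, and it is harmless here since the paper assumes connected signed graphs throughout.
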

\begin{proof}
	Suppose $\Sigma_2$ is all positive. Let $\bdim(\Sigma_1)=k$ and $\zeta_1:V(\Sigma_1)\rightarrow\Omega^k$ be the $k$-positive function. Now, the function $\zeta:V(\Sigma_1[\Sigma_2])\rightarrow\Omega^k$ defined by  $\zeta((u_i,v_j))=\zeta_1(u_i)$ for $1\leq i \leq |V(\Sigma_1)|$ and $1\leq j \leq |V(\Sigma_2)|$
 switches $\Sigma_1[\Sigma_2]$ to all-positive, and hence $\bdim(\Sigma_1[\Sigma_2])\leq k$.  However, since $\Sigma_1$ is a subgraph of $\Sigma_1[\Sigma_2]$, we must have $\bdim(\Sigma_1[\Sigma_2])=k=\bdim(\Sigma_1)$.
\end{proof}
\begin{rmk}
	Let $\Sigma_1=+K_2$ and $\Sigma_2=-K_2$. Then $\Sigma_1[\Sigma_2]$ is the antibalanced signed complete graph $K_4^-$ and hence $\bdim(\Sigma_1[\Sigma_2])=\bar{\nu}(4)=3\neq\bdim(\Sigma_2)$. Thus, $\bdim(\Sigma_1[\Sigma_2])$ and $\bdim(\Sigma_2)$ need not be equal if $\Sigma_1$ is all-positive.
\end{rmk}
We now focus on the \textit{BCD-lexicographic product} of two signed graphs. To begin with, we restate  the Theorem 2.3 from \cite{bcd}, by removing the incorrect part (see \cite{bcd1}) and provide an alternate proof for it.

\begin{thm}\label{bcdlexswitcheq}
	Let $\Sigma_1$ and $\Sigma_2$ be two signed graphs and let $\Sigma_1*\Sigma_2$ be their BCD - lexicographic product. If $\Sigma_1 \sim \Sigma_1'$, then $\Sigma_1*\Sigma_2\sim\Sigma_1'*\Sigma_2$.
\end{thm}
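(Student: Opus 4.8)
The plan is to imitate the proof of Theorem~\ref{hglexswitcheq} almost verbatim, exploiting the fact that the first factor is switched while the second is left untouched. Since $\Sigma_1 \sim \Sigma_1'$, I would first fix a switching function $\eta:V(\Sigma_1)\to\{-1,1\}$ with $\Sigma_1^\eta=\Sigma_1'$, and then lift it to the product by defining $\eta':V(\Sigma_1*\Sigma_2)\to\{-1,1\}$ via $\eta'((u_i,v_j))=\eta(u_i)$, so that $\eta'$ ignores the second coordinate entirely. The goal is to show that $(\Sigma_1*\Sigma_2)^{\eta'}=\Sigma_1'*\Sigma_2$, which gives $\Sigma_1*\Sigma_2\sim\Sigma_1'*\Sigma_2$ directly.

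The core of the argument is an edge-by-edge verification following the three clauses in the definition of the BCD-lexicographic product. For an arbitrary edge $(u_i,v_j)(u_k,v_l)$, the switched sign is
\[
\sigma^{\eta'}((u_i,v_j)(u_k,v_l))=\eta(u_i)\,\sigma((u_i,v_j)(u_k,v_l))\,\eta(u_k).
\]
In the first case ($u_i\sim u_k$, $v_j\nsim v_l$) the middle factor is $\sigma_1(u_iu_k)$, so the product collapses to $\eta(u_i)\sigma_1(u_iu_k)\eta(u_k)=\sigma_1^\eta(u_iu_k)=\sigma_1'(u_iu_k)$, matching the signature of $\Sigma_1'*\Sigma_2$ on this edge. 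In the second case ($u_i\sim u_k$, $v_j\sim v_l$) the middle factor is $\sigma_1(u_iu_k)\sigma_2(v_jv_l)$; since the $\sigma_2$ contribution is a scalar that commutes through, the $\eta$ factors act only on the $\sigma_1$ part, giving $\sigma_1^\eta(u_iu_k)\sigma_2(v_jv_l)=\sigma_1'(u_iu_k)\sigma_2(v_jv_l)$, again as required. In the third case ($u_i=u_k$, $v_j\sim v_l$) we have $\eta(u_i)=\eta(u_k)$, so the two $\eta$ factors multiply to $\eta(u_k)^2=1$ and leave $\sigma_2(v_jv_l)$ unchanged, which is exactly the signature of $\Sigma_1'*\Sigma_2$ on such an edge.

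The step that deserves the most care is the second (mixed) case, since it is the one genuinely new feature compared with the HG-lexicographic product: there both $\sigma_1$ and $\sigma_2$ appear in the same sign, and one must confirm that switching by $\eta'$ transforms precisely the $\sigma_1$ factor into $\sigma_1'$ while passing the $\sigma_2$ factor through untouched. This follows because $\eta'$ depends only on the first coordinate, so the $\pm1$ factors it contributes are exactly those that realize $\sigma_1\mapsto\sigma_1^\eta=\sigma_1'$, independent of $v_j,v_l$. Beyond this bookkeeping, there is no substantive obstacle; assembling the three cases yields $(\Sigma_1*\Sigma_2)^{\eta'}=\Sigma_1'*\Sigma_2$ and hence the claimed switching equivalence.
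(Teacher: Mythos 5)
Your proposal is correct and follows exactly the paper's approach: lift $\eta$ to $\eta'((u_i,v_j))=\eta(u_i)$ and verify $\sigma^{\eta'}=\sigma'$ edge by edge. The only difference is that you write out the three-case verification explicitly, whereas the paper states the resulting equality without detail.
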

\begin{proof}
	Let $\sigma_1$, $\sigma_1'$, $\sigma_2$, $\sigma$ and $\sigma'$ denote the signatures of $\Sigma_1$, $\Sigma_1'$, $\Sigma_2$, $\Sigma_1*\Sigma_2$ and $\Sigma_1'*\Sigma_2$ respectively. Since $\Sigma_1 \sim \Sigma_1'$, there exist a switching function $\eta:V(\Sigma_1)\rightarrow\{-1,1\}$ such that $\Sigma_1^\eta=\Sigma_1'$. Define the map $\eta':V(\Sigma_1*\Sigma_2)\rightarrow\{-1,1\}$ as $\eta'((u_i,v_j))=\eta(u_i)$ for $1\leq i \leq |V(\Sigma_1)|$ and $1\leq j \leq |V(\Sigma_2)|$. Then, for any edge $(u_i,v_j)(u_k,v_l)$ in $\Sigma_1*\Sigma_2$, we have, $\sigma^{\eta'}((u_i,v_j)(u_k,v_l))=\sigma'((u_i,v_j)(u_k,v_l))$. Thus, $(\Sigma_1*\Sigma_2)^{\eta'}=\Sigma_1'*\Sigma_2$
and hence $\Sigma_1*\Sigma_2\sim \Sigma_1'*\Sigma_2$.
\end{proof}

Since balancing dimension is $1$-switching  invariant, we have the following result.
\begin{cor}
	If $\Sigma_1$ and $\Sigma_2$ are any two signed graphs and if $\Sigma_1\sim\Sigma_1'$, then $\bdim(\Sigma_1*\Sigma_2)=\bdim(\Sigma_1'*\Sigma_2)$.
\end{cor}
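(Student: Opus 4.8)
The plan is to derive this corollary directly, with essentially all of the work already carried out in Theorem~\ref{bcdlexswitcheq} together with the $1$-switching invariance of balancing dimension recalled in the introduction from \cite{sa1}. First I would invoke Theorem~\ref{bcdlexswitcheq}: since $\Sigma_1 \sim \Sigma_1'$, that theorem guarantees $\Sigma_1 * \Sigma_2 \sim \Sigma_1' * \Sigma_2$. This immediately reduces the claim to the general fact that two switching-equivalent signed graphs have the same balancing dimension.

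Next I would apply the $1$-switching invariance of $\bdim$. Concretely, $\Sigma_1 * \Sigma_2$ and $\Sigma_1' * \Sigma_2$ differ only through the switching function $\eta'$ constructed in the proof of Theorem~\ref{bcdlexswitcheq}. Given any $k$-positive (vector valued) switching function $\zeta$ for one of them, composing it with the sign pattern of $\eta'$ produces a $k$-positive function for the other, since an ordinary $\{-1,1\}$-valued switching only multiplies each vertex label by $\pm 1$ and hence preserves the signs of the inner products $\langle \zeta(u),\zeta(v)\rangle$ appearing in Definition~1.1. Consequently the least admissible $k$ is the same for both graphs, giving $\bdim(\Sigma_1 * \Sigma_2) = \bdim(\Sigma_1' * \Sigma_2)$.

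There is no genuine obstacle here, as both ingredients are in hand; the only point requiring a moment of care is that the equivalence $\sim$ delivered by Theorem~\ref{bcdlexswitcheq} is ordinary ($1$-valued) switching equivalence—the function $\eta'$ takes values in $\{-1,1\}$—which is precisely the setting in which the invariance statement of \cite{sa1} applies, so the hypothesis of the invariance result is indeed met.
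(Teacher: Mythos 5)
Your proposal is correct and matches the paper's own derivation exactly: the paper likewise obtains the corollary by combining Theorem~\ref{bcdlexswitcheq} with the $1$-switching invariance of balancing dimension from \cite{sa1}. Your extra remark justifying that invariance (multiplying vertex labels by $\pm 1$ preserves the signs of the inner products) is a correct elaboration of a fact the paper simply cites.
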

\begin{rmk}
	Note that Corollary~\ref{hgc2} does not hold  in the case of \textit{BCD-lexicographic product}. To illustrate this consider $\Sigma_1=(P_3,\sigma)$ and $\Sigma_2=-P_2$ depicted in Figure~\ref{fig2}. Then, $(u_1,v_1)(u_2,v_1)(u_2,v_2)(u_1,v_1)$ forms a negative triangle in $-(\Sigma_1*\Sigma_2)$, making it unbalanced. Thus, $\Sigma_1*\Sigma_2$ is not antibalanced, though $\Sigma_1$ is antibalanced and $\Sigma_2$ is all-negative.

		\begin{figure}[h!]
		\centering
		\begin{subfigure}{.25\textwidth}
			\centering
			\begin{tikzpicture}[x=0.5cm, y=0.5cm]
				\vertex[fill] (v1) at (0,0) [label=below:$u_1$] {};
				\vertex[fill] (v2) at (3,0) [label=below:$u_2$] {};
				\vertex[fill] (v3) at (6,0) [label=below:$u_3$] {};
				\path[very thick, dotted]
				(v1) edge (v2)
				;
				\path
				(v2) edge (v3)
				;
			\end{tikzpicture}
			\caption{$\Sigma_1=(P_3,\sigma)$}
			\label{fig:sub3}
		\end{subfigure}
				\begin{subfigure}{.25\textwidth}
			\centering
			\begin{tikzpicture}[x=0.5cm, y=0.5cm]
				\vertex[fill] (v1) at (0,0) [label=below:$v_1$] {};
				\vertex[fill] (v2) at (3,0) [label=below:$v_2$] {};
				\path[very thick, dotted]
				(v1) edge (v2)
				;
			\end{tikzpicture}
			\caption{$\Sigma_2=-K_2$}
			\label{fig:sub4}
		\end{subfigure}
				\begin{subfigure}{.25\textwidth}
			\centering
	\begin{tikzpicture}[x=0.6cm, y=0.6cm]
	\draw[fill=black](0,0)circle(3pt);
	\draw[fill=black](3,0)circle (3pt);
	\draw[fill=black](0,-3)circle (3pt);
	\draw[fill=black](3,-3)circle (3pt);
	\draw[fill=black](0,-6)circle(3pt);
	\draw[fill=black](3,-6)circle (3pt);
	\node at(-1.5,.75){$(u_1,v_1)$};
	\node at(4.5,.75){$(u_1,v_2)$};
	\node at(-1.5,-3){$(u_2,v_1)$};
	\node at(4.5,-3){$(u_2,v_2)$};
	\node at(-1.5,-6.75){$(u_3,v_1)$};
	\node at(4.5,-6.75){$(u_3,v_2)$};
	\draw[thick](0,0)--(0,-3)--(3,-6);
	\draw[thick](3,0)--(3,-3)--(0,-6);
	\draw[dashed] (0,0)--(3,0)--(0,-3)--(3,-3)--(0,0);
	\draw[dashed] (0,-3)--(0,-6)--(3,-6)--(3,-3);
\end{tikzpicture}
			\caption{$\Sigma_1*\Sigma_2$}
			\label{fig:sub5}
		\end{subfigure}

\caption{The \textit{BCD-lexicographic product} $\Sigma_1*\Sigma_2$ is not antibalanced}
\label{fig2}
\end{figure}
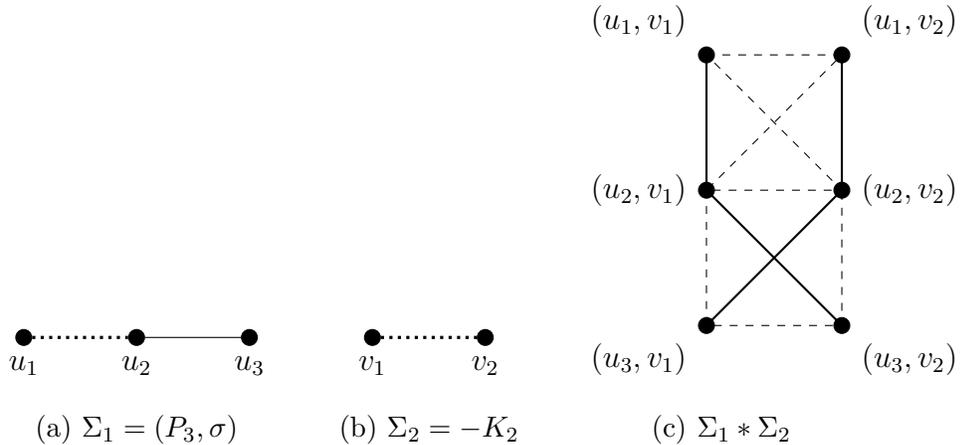

\end{rmk}
\begin{thm}
	Let $\Sigma_1=(G_1,\sigma_1)$ and $\Sigma_2=(G_2,\sigma_2)$ be two signed graphs, where $\Sigma_2$ is all-positive. Then $\bdim(\Sigma_1*\Sigma_2)=\bdim(\Sigma_1)$.
\end{thm}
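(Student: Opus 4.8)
The plan is to mirror the proof of the corresponding HG-lexicographic result, after first recording that the two products coincide under the all-positive hypothesis. Since $\Sigma_2$ is all-positive, $\sigma_2(v_jv_l)=+1$ for every edge $v_jv_l$ of $\Sigma_2$. Feeding this into the definition of the \textit{BCD-lexicographic product} collapses its first two cases into the single value $\sigma_1(u_iu_k)$, the factor $\sigma_2(v_jv_l)=+1$ being inert, and reduces the third case to $+1$. Hence every edge of $\Sigma_1*\Sigma_2$ is signed $\sigma_1(u_iu_k)$ whenever $u_i\sim u_k$ and $+1$ whenever $u_i=u_k$ and $v_j\sim v_l$; this is precisely the signature of $\Sigma_1[\Sigma_2]$, so the statement already follows from the all-positive HG theorem proved above. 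I would nonetheless exhibit the switching function explicitly to keep the argument self-contained.

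Accordingly, I would set $\bdim(\Sigma_1)=k$, fix a $k$-positive function $\zeta_1:V(\Sigma_1)\to\Omega^k$, and lift it constantly along the second coordinate via $\zeta:V(\Sigma_1*\Sigma_2)\to\Omega^k$, $\zeta((u_i,v_j))=\zeta_1(u_i)$. Positivity is then checked edge by edge. On an edge with $u_i\sim u_k$ the switched sign is $\sigma_1(u_iu_k)\,\sgn\langle\zeta_1(u_i),\zeta_1(u_k)\rangle=\sigma_1^{\zeta_1}(u_iu_k)=+1$, the inner product being nonzero because $u_iu_k\in E(G_1)$ and $\zeta_1$ is a valid $k$-switching function. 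On an edge with $u_i=u_k$ and $v_j\sim v_l$ the edge sign is $+1$ and the switched sign is $\sgn\langle\zeta_1(u_i),\zeta_1(u_i)\rangle=\sgn\lVert\zeta_1(u_i)\rVert^2=+1$. Thus $\zeta$ switches $\Sigma_1*\Sigma_2$ to all-positive and $\bdim(\Sigma_1*\Sigma_2)\le k$.

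For the reverse inequality I would note that fixing any vertex $v_1$ of $\Sigma_2$ embeds $\Sigma_1$ as a subgraph of $\Sigma_1*\Sigma_2$ on the vertices $(u_i,v_1)$: since $v_1\nsim v_1$, each such edge falls in the first case with sign $\sigma_1(u_iu_k)$, faithfully reproducing $\Sigma_1$. As balancing dimension cannot decrease on passing to a subgraph, $\bdim(\Sigma_1*\Sigma_2)\ge k$, and the two bounds give equality. The one point that genuinely needs care---and the step I expect to be the main obstacle---is the second case above, which requires $\lVert\zeta_1(u_i)\rVert^2>0$, i.e.\ $\zeta_1(u_i)\neq 0$; an isolated vertex of $\Sigma_1$ could in principle violate this. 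Under the paper's standing connectedness assumption, however, every vertex of $\Sigma_1$ (when $|V(\Sigma_1)|\ge 2$) has a neighbor, which forces $\zeta_1$ to be nonzero there, while the single-vertex case is immediate since then $\Sigma_1*\Sigma_2$ is itself all-positive.
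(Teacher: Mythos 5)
Your proposal is correct and follows essentially the same route as the paper: lift a $k$-positive function $\zeta_1$ of $\Sigma_1$ via $\zeta((u_i,v_j))=\zeta_1(u_i)$, verify positivity on each edge type, and use the subgraph bound for the reverse inequality. Your additional observations---that $\Sigma_1*\Sigma_2$ coincides with $\Sigma_1[\Sigma_2]$ when $\Sigma_2$ is all-positive, and that $\zeta_1$ is nonzero at every vertex under the standing connectedness assumption---are correct refinements the paper leaves implicit, but they do not change the argument.
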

\begin{proof}
	Suppose $\Sigma_2$ is all positive and let $\bdim(\Sigma_1)=k$. Then the function $\zeta:V(\Sigma_1*\Sigma_2)\rightarrow\Omega^k$ defined by  $\zeta((u_i,v_j))=\zeta_1(u_i)$ for $1\leq i \leq |V(\Sigma_1)|$ and $1\leq j \leq |V(\Sigma_2)|$, where $\zeta_1:V(\Sigma_1)\rightarrow\Omega^k$ is the $k$-positive function for $\Sigma_1$,  switches $\Sigma_1*\Sigma_2$ to all-positive, and hence $\bdim(\Sigma_1*\Sigma_2)\leq k$.  However, since $\Sigma_1$ is a subgraph of $\Sigma_1*\Sigma_2$, we must have $\bdim(\Sigma_1*\Sigma_2)=k=\bdim(\Sigma_1)$.
\end{proof}

\begin{thm}\label{bcdthm3}
	Let $\Sigma_1=(G_1,\sigma_1)$ be a balanced signed graph and  $\Sigma_2=(K_n,\sigma_2)$ be a signed complete graph. Then $\bdim(\Sigma_1*\Sigma_2)=\bdim(\Sigma_2)$.
\end{thm}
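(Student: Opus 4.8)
The plan is to prove the equality by establishing the two inequalities $\bdim(\Sigma_1*\Sigma_2)\ge\bdim(\Sigma_2)$ and $\bdim(\Sigma_1*\Sigma_2)\le\bdim(\Sigma_2)$ separately. The lower bound is immediate from subgraph monotonicity: fixing any vertex $u_i$ of $\Sigma_1$, the ``column'' $\{u_i\}\times V(\Sigma_2)$ induces a copy of $\Sigma_2$ inside $\Sigma_1*\Sigma_2$ (its internal edges are governed by the third clause of the BCD definition, $u_i=u_k$ and $v_j\sim v_l$, with sign $\sigma_2(v_jv_l)$), so since the balancing dimension of a subgraph cannot exceed that of the whole graph, $\bdim(\Sigma_1*\Sigma_2)\ge\bdim(\Sigma_2)$.

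For the upper bound I would first reduce to the case where $\Sigma_1$ is all-positive. Since $\Sigma_1$ is balanced, it is $1$-switching equivalent to an all-positive signed graph $\Sigma_1'$, and by the corollary following Theorem~\ref{bcdlexswitcheq} we have $\bdim(\Sigma_1*\Sigma_2)=\bdim(\Sigma_1'*\Sigma_2)$; hence we may assume $\sigma_1\equiv+1$. Put $k=\bdim(\Sigma_2)$ and let $\zeta_2:V(\Sigma_2)\to\Omega^k$ be a $k$-positive function for $\Sigma_2$, so that $\sgn\langle\zeta_2(v_j),\zeta_2(v_l)\rangle=\sigma_2(v_jv_l)$ for every edge $v_jv_l$. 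Because $\Sigma_2=K_n$ is complete, every pair of distinct vertices is an edge, so in fact $\langle\zeta_2(v_j),\zeta_2(v_l)\rangle\neq 0$ for all $j\neq l$, and consequently $\zeta_2(v_j)\neq 0$ for each $j$ (when $n\ge 2$; the case $n=1$ is trivial). I would then define $\zeta:V(\Sigma_1*\Sigma_2)\to\Omega^k$ by $\zeta((u_i,v_j))=\zeta_2(v_j)$, discarding the first coordinate entirely.

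The verification proceeds by the three cases of the BCD signature, using that $\Sigma_1$ is now all-positive and that $v_j\nsim v_l$ in $K_n$ forces $v_j=v_l$. When $v_j=v_l$ (first clause), $\sigma=+1$ and $\langle\zeta_2(v_j),\zeta_2(v_j)\rangle=\lVert\zeta_2(v_j)\rVert^2>0$, giving switched sign $+1$; in the remaining two clauses $\sigma=\sigma_2(v_jv_l)$ and $\sgn\langle\zeta_2(v_j),\zeta_2(v_l)\rangle=\sigma_2(v_jv_l)$, so the switched sign is $(\sigma_2(v_jv_l))^2=+1$. Thus $\zeta$ switches $\Sigma_1*\Sigma_2$ to all-positive and $\bdim(\Sigma_1*\Sigma_2)\le k$, completing the argument.

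The step I expect to be the real obstacle is confirming that $\zeta$ is a \emph{legitimate} vector valued switching function, i.e.\ that $\langle\zeta((u_i,v_j)),\zeta((u_k,v_l))\rangle\neq 0$ on \emph{every} edge of the product, not merely that the switched signs come out positive. This is exactly where completeness of $\Sigma_2$ is indispensable: for a non-complete $\Sigma_2$ an edge of the first type ($u_i\sim u_k$ with $v_j\nsim v_l$ but $v_j\neq v_l$) could have $\langle\zeta_2(v_j),\zeta_2(v_l)\rangle=0$, since $\zeta_2$ is only constrained on edges of $\Sigma_2$, and the construction would collapse. So I would make sure to flag the completeness hypothesis precisely at this point.
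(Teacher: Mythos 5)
Your proposal is correct and follows essentially the same route as the paper: switch $\Sigma_1$ to all-positive using the switching-equivalence result for the BCD-product, define $\zeta((u_i,v_j))=\zeta_2(v_j)$, and combine with subgraph monotonicity for the lower bound. Your extra verification that $\zeta$ is a legitimate vector valued switching function (using completeness of $K_n$ to handle the $v_j\nsim v_l$ clause) is a detail the paper leaves implicit, but it is the same argument.
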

\begin{proof}
	Since, $\Sigma_1$ is balanced, by Theorem~\ref{bcdlexswitcheq}, we can consider it as all-positive. Let $\bdim(\Sigma_2)=k$ and let $\zeta_2:V(\Sigma_2)\rightarrow\Omega^k$ be the corresponding $k$-positive function. Then the vector valued switching function $\zeta:V(\Sigma_1*\Sigma_2)\rightarrow\Omega^k$ defined by  $\zeta((u_i,v_j))=\zeta_2(v_j)$ for $1\leq i \leq |V(\Sigma_1)|$ and $1\leq j \leq n$ switches $\Sigma_1*\Sigma_2$ to all-positive, and hence $\bdim(\Sigma_1*\Sigma_2)\leq k$.  However, since $\Sigma_2$ is a subgraph of $\Sigma_1*\Sigma_2$, we must have $\bdim(\Sigma_1*\Sigma_2)=k=\bdim(\Sigma_2)$.
\end{proof}
\begin{rmk}
	The Theorem~\ref{bcdthm3} does not hold for the \textit{HG-lexicographic product} of signed graphs. As an example, let $\Sigma_1=+K_2$ and $\Sigma_2=-K_2$. Then the \textit{HG-lexicographic product} $\Sigma_1[\Sigma_2]$ is the antibalanced signed complete graph $(K_4,\sigma)$ and hence $\bdim(\Sigma_1[\Sigma_2])=\bar{\nu}(4)=3\neq\bdim(\Sigma_2)$.
	Thus, $\bdim(\Sigma_1[\Sigma_2])$ and $\bdim(\Sigma_2)$ need not be equal if $\Sigma_1$ is balanced and $\Sigma_2$ is a signed complete graph.
\end{rmk}
\subsection{Balancing dimension of the tensor product}
We now focus on the tensor product of signed graphs. The tensor product of two signed graphs is given in~\cite{vm} as follows.

\begin{defn}
	The \textit{tensor product} of two signed graphs $\Sigma_1=(G_1,\sigma_1)$ and $\Sigma_2=(G_2,\sigma_2)$ is the signed graph $\Sigma=\Sigma_1\times \Sigma_2$ whose underlying graph is $G=G_1\times G_2$ and with the sign of an edge $(u_i,v_j)(u_k,v_l)$ of $G$ given by
	$$\sigma(((u_i,v_j)(u_k,v_l))=\sigma_1(u_1u_k)\sigma_2(v_jv_l).$$
\end{defn}
\begin{thm}\label{tpchar}\cite{ds1}
	Let $\Sigma_1$ and $\Sigma_2$ be two connected signed
	graphs of order at least 2. Then, the tensor product $\Sigma_1\times\Sigma_2$ is balanced if and only if $\Sigma_1$ and $\Sigma_2$ are both balanced or both antibalanced.
\end{thm}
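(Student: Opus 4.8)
The plan is to translate balance into a statement about signs of closed walks and then exploit the fact that a closed walk in $\Sigma_1\times\Sigma_2$ is built coordinatewise from two equal-length closed walks in the factors. Throughout I write $\sgn(W)$ for the product of the signs of the edges of a walk $W$ (with multiplicity), and I use two facts: first, $\Sigma$ is balanced if and only if every closed walk of $\Sigma$ is positive (since $\sgn$ is $1$-switching invariant on closed walks and a negative cycle is itself a negative closed walk); second, the refinement that $\Sigma$ is balanced or antibalanced if and only if every \emph{even} closed walk of $\Sigma$ is positive. I would prove this refinement by viewing $\sgn$ and length-parity as two $\mathbb{F}_2$-linear functionals $s,\lambda$ on the cycle space of $\Sigma$: the hypothesis says $\ker\lambda\subseteq\ker s$, forcing $s\in\{0,\lambda\}$, i.e. either all cycles are positive ($\Sigma$ balanced) or every cycle $C$ has sign $(-1)^{|C|}$ ($\Sigma$ antibalanced). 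The one computational lemma needed is: if $W_1=u_0u_1\cdots u_\ell$ and $W_2=v_0v_1\cdots v_\ell$ are closed walks of equal length $\ell$, then $W=(u_0,v_0)(u_1,v_1)\cdots(u_\ell,v_\ell)$ is a closed walk in $\Sigma_1\times\Sigma_2$ with $\sgn(W)=\sgn(W_1)\sgn(W_2)$, which is immediate from $\sigma((u_t,v_t)(u_{t+1},v_{t+1}))=\sigma_1(u_tu_{t+1})\sigma_2(v_tv_{t+1})$.

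For sufficiency, suppose both factors are balanced, with switching functions $\zeta_i:V(\Sigma_i)\to\{-1,1\}$ realizing $\sigma_i=\zeta_i(\cdot)\zeta_i(\cdot)$ on edges; then $\zeta(u,v)=\zeta_1(u)\zeta_2(v)$ switches $\Sigma_1\times\Sigma_2$ to all-positive, since $\sigma_1(u_iu_k)\sigma_2(v_jv_l)=\zeta(u_i,v_j)\zeta(u_k,v_l)$ on every edge. If instead both are antibalanced, write $\sigma_i=-\zeta_i(\cdot)\zeta_i(\cdot)$ on edges; the same $\zeta$ works because the two factors of $-1$ cancel: $\sigma_1(u_iu_k)\sigma_2(v_jv_l)=(-\zeta_1(u_i)\zeta_1(u_k))(-\zeta_2(v_j)\zeta_2(v_l))=\zeta(u_i,v_j)\zeta(u_k,v_l)$. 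Either way the product is balanced.

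For necessity --- the crux --- assume $\Sigma_1\times\Sigma_2$ is balanced, so every closed walk of the product is positive. I first show every even closed walk of each factor is positive. Given an even closed walk $W_1$ in $\Sigma_1$ of length $\ell$, pair it with the back-and-forth walk $W_2$ traversing a single edge of $\Sigma_2$ repeatedly (length $\ell$, $\sgn(W_2)=+1$); the combination lemma gives $\sgn(W_1)=\sgn(W_1)\sgn(W_2)=\sgn(W)=+1$, and the symmetric argument handles $\Sigma_2$. By the even-walk criterion, each of $\Sigma_1,\Sigma_2$ is therefore balanced or antibalanced. It remains to exclude a type mismatch, i.e. one factor balanced-but-not-antibalanced and the other antibalanced-but-not-balanced. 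In that situation the first factor would contain a \emph{positive} odd cycle $C_1$ and the second a \emph{negative} odd cycle $C_2$ (a balanced graph that is not antibalanced must have a positive odd cycle, and an antibalanced graph that is not balanced must have a negative odd cycle). Padding each by back-and-forth traversals of one of its edges, I bring them to a common odd length $L$ without changing their signs, and the combination lemma produces a closed walk of the product with sign $(+1)(-1)=-1$, contradicting balance. Hence the two factors share a type.

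The main obstacle lies entirely in necessity, and specifically in the possibility that a factor is bipartite: then it has no odd closed walks, so odd cycles in the \emph{other} factor cannot be tested against it by comparing same-length walks. The even-closed-walk characterization sidesteps this, since back-and-forth walks are available regardless of bipartiteness; balance of the product already pins each factor to balanced-or-antibalanced, and the short odd-cycle padding argument only has to rule out a genuine sign clash, which can arise solely when both factors are non-bipartite. I would also double-check the boundary cases where a factor is a tree or otherwise bipartite, noting that such a factor is simultaneously balanced and antibalanced, so the conclusion ``both balanced or both antibalanced'' holds automatically once the other factor is shown to be balanced or antibalanced.
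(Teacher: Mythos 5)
Your proof is correct, but there is nothing in the paper to compare it against: Theorem~\ref{tpchar} is imported verbatim from the cited reference of Sinha and Garg, and the paper supplies no proof of its own. Your argument is a valid self-contained derivation. The sufficiency half is the standard potential-function computation (and is essentially the same cancellation the paper itself uses later when it builds the $k$-switching $\zeta(u,v)=\zeta_1(u)\zeta_2(v)$ for the tensor product with one balanced factor). The necessity half is where you add real content: the reduction of ``balanced or antibalanced'' to ``every even closed walk is positive'' via the two $\mathbb{F}_2$-functionals on the cycle space is sound (connectivity lets you realize sums of cycles as closed walks by doubling connecting paths, which perturbs neither sign nor parity), the back-and-forth padding walks exist because each factor is connected of order at least $2$, and the final mismatch case correctly isolates a positive odd cycle against a negative odd cycle, both necessarily in non-bipartite factors, and brings them to a common odd length to manufacture a negative closed walk in the product. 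The published proof in the source reference instead runs through Harary's bipartition/marking characterization of balance with a case analysis; your walk-based route is arguably cleaner, makes the role of bipartiteness explicit (a bipartite factor that is balanced-or-antibalanced is automatically both, so no clash can occur there), and generalizes more readily. One small presentational point: you invoke ``$\Sigma$ balanced iff every closed walk is positive'' and its even-walk refinement as known facts; since the refinement is the crux, it deserves to be stated and proved as a standalone lemma rather than sketched inside the preamble of the proof.
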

\begin{thm}
	Let $\Sigma_1=(G_1,\sigma_1)$ and $\Sigma_2=(G_2,\sigma_2)$ be two signed graphs and $\Sigma_1\times\Sigma_2$ be their tensor product. Then
	\begin{itemize}
		\item [\rm{(i)}] $\bdim(\Sigma_1\times\Sigma_2) \leq \bdim(\Sigma_1)$, \mbox{  if $\Sigma_2$   is balanced}.
		\item [\rm{(ii)}] $\bdim(\Sigma_1\times\Sigma_2) \leq \bdim(\Sigma_2)$, \mbox{  if $\Sigma_1$   is balanced}
	\end{itemize}
\end{thm}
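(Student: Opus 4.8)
The plan is to exhibit, under the hypothesis of (i), an explicit vector valued switching function that switches $\Sigma_1\times\Sigma_2$ to all-positive using only $\bdim(\Sigma_1)$ coordinates; part (ii) will then follow by the symmetry of the tensor product in its two factors. Since $\Sigma_2$ is balanced, there is an ordinary ($1$-)switching function $\zeta_2:V(\Sigma_2)\to\{-1,1\}$ with $\sigma_2^{\zeta_2}\equiv+1$. Writing $k=\bdim(\Sigma_1)$, let $\zeta_1:V(\Sigma_1)\to\Omega^k$ be a $k$-positive function for $\Sigma_1$, so that $\sigma_1^{\zeta_1}\equiv+1$.

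I would then set $\zeta:V(\Sigma_1\times\Sigma_2)\to\Omega^k$ by $\zeta((u_i,v_j))=\zeta_2(v_j)\,\zeta_1(u_i)$, the scalar $\zeta_2(v_j)\in\{-1,1\}$ scaling the vector $\zeta_1(u_i)\in\Omega^k$ (so each coordinate stays in $\Omega$ and $\zeta$ is genuinely $\Omega^k$-valued). The first routine check is admissibility: for an edge $(u_i,v_j)(u_k,v_l)$ of the tensor product, bilinearity of the inner product gives $\langle\zeta((u_i,v_j)),\zeta((u_k,v_l))\rangle=\zeta_2(v_j)\zeta_2(v_l)\langle\zeta_1(u_i),\zeta_1(u_k)\rangle$, which is nonzero because $u_iu_k\in E(G_1)$ forces $\langle\zeta_1(u_i),\zeta_1(u_k)\rangle\neq0$ while $\zeta_2(v_j)\zeta_2(v_l)=\pm1$.

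The crux is the switched sign on an arbitrary edge. Pulling the scalars $\zeta_2(v_j),\zeta_2(v_l)$ out of the $\sgn$ and regrouping, I would verify
\begin{align*}
\sigma^\zeta((u_i,v_j)(u_k,v_l))
&=\sigma_1(u_iu_k)\sigma_2(v_jv_l)\,\sgn\!\bigl(\zeta_2(v_j)\zeta_2(v_l)\langle\zeta_1(u_i),\zeta_1(u_k)\rangle\bigr)\\
&=\bigl[\sigma_1(u_iu_k)\sgn\langle\zeta_1(u_i),\zeta_1(u_k)\rangle\bigr]\bigl[\zeta_2(v_j)\sigma_2(v_jv_l)\zeta_2(v_l)\bigr]\\
&=\sigma_1^{\zeta_1}(u_iu_k)\,\sigma_2^{\zeta_2}(v_jv_l)=(+1)(+1)=+1,
\end{align*}
where the factorization uses $\sgn(\varepsilon\,t)=\varepsilon\,\sgn(t)$ for $\varepsilon=\pm1$. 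Hence $\zeta$ switches $\Sigma_1\times\Sigma_2$ to all-positive, and $\bdim(\Sigma_1\times\Sigma_2)\le k=\bdim(\Sigma_1)$. Part (ii) is obtained verbatim after interchanging the roles of the two factors, taking $\zeta((u_i,v_j))=\zeta_1(u_i)\,\zeta_2(v_j)$ with $\zeta_1$ now the $1$-switching of the balanced $\Sigma_1$ and $\zeta_2$ a $\bdim(\Sigma_2)$-positive function for $\Sigma_2$.

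I expect no real obstacle in the computation itself; the only conceptual point worth flagging is why the conclusion is an inequality rather than the equality obtained earlier for the Cartesian and lexicographic products. In those cases the first factor embeds as a genuine subgraph of the product, supplying the matching lower bound $\bdim(\text{product})\ge\bdim(\Sigma_1)$, whereas a tensor product need not contain $\Sigma_1$ as a subgraph at all (indeed it can be disconnected), so no such lower bound is forced and only $\le$ can be asserted.
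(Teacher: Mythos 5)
Your proposal is correct and follows essentially the same route as the paper: both define $\zeta((u_i,v_j))=\zeta_1(u_i)\zeta_2(v_j)$ with $\zeta_1$ a $k$-positive function for $\Sigma_1$ and $\zeta_2$ an ordinary switching function balancing $\Sigma_2$, and conclude $\bdim(\Sigma_1\times\Sigma_2)\le k$; you merely carry out the sign computation explicitly where the paper asserts it. Your closing remark on why only an inequality holds is consistent with the paper's subsequent example ($-K_3\times -K_2$ is balanced while $\bar\nu(3)=3$).
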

\begin{proof}
	Suppose $\bdim(\Sigma_1)=k$ and $\Sigma_2$ is balanced. Let $\zeta_1:V(\Sigma_1)\rightarrow\Omega^k$ and $\zeta_2:V(\Sigma_2)\rightarrow \{-1,+1\}$ be the corresponding switching functions. Then $\zeta: V(\Sigma_1\times\Sigma_2)\rightarrow\Omega^k$ defined by $\zeta((u_i,v_j))=\zeta_1(u_i)\zeta_2(v_j)$ for $1\leq i \leq |V(\Sigma_1)|$ and $1\leq j \leq |V(\Sigma_2)|$ switches $\Sigma_1\times\Sigma_2$ to all-positive, and hence $\bdim(\Sigma_1\times\Sigma_2)\leq k$.

Similar is the proof of the next part.
\end{proof}
\begin{rmk}
	Let $\Sigma_1=-K_3$ and $\Sigma_2=-K_2$ denote the all-negative signed complete graphs. Then by Theorem~\ref{tpchar}, $\bdim(\Sigma_1\times\Sigma_2)=1$. However, $\bdim(\Sigma_1)=\bar{\nu}(3)=3$. Hence, unlike the Cartesian product and the lexicographic products, there exist cases in which balancing dimension of the tensor product is strictly less than balancing dimension of its factor(s).

	As an example for the case where equality holds, consider $\Sigma_3=u_1u_2u_3$ and $\Sigma_4=v_1v_2v_3$ as the all-negative  and all-positive signed complete graphs on three vertices respectively. Then $(u_1,v_1)(u_2,v_2)(u_3,v_3)$ forms a negative triangle in $\Sigma_3\times\Sigma_4$. Thus, $\bdim(\Sigma_3\times\Sigma_4)=\bdim(\Sigma_4)$.
\end{rmk}
\subsection{Balancing dimension of the strong product}
Finally, we consider the strong product of signed graphs.
%

\begin{defn}\cite{bcd}
	The \textit{strong product} of two signed graphs $\Sigma_1=(G_1,\sigma_1)$ and $\Sigma_2=(G_2,\sigma_2)$ is the signed graph $\Sigma=\Sigma_1\boxtimes \Sigma_2$ whose underlying graph is $G=G_1\boxtimes G_2$ and with the sign of an edge $(u_i,v_j)(u_k,v_l)$ of $G$ given by
	$$\sigma((u_i,v_j)(u_k,v_l))=
\left\{
\begin{array}{ll}
	\sigma_1(u_iu_k)  & \mbox{if } u_i \sim u_k\  \mbox{and } v_j= v_l, \\

	\sigma_2(v_jv_l)  & \mbox{if } u_i=u_k\  \mbox{and } v_j\sim v_l, \\
	\sigma_1(u_iu_k)\sigma_2(v_jv_l)  & \mbox{if } u_i \sim u_k\  \mbox{and } v_j\sim v_l. \\
\end{array}
\right.$$
\end{defn}
\begin{lem}\label{strongswitcheq1}
Let $\Sigma_1$ and $\Sigma_2$ be two signed graphs and let $\Sigma_1\boxtimes\Sigma_2$ be their strong product.
\begin{itemize}
	\item [\rm{(i)}] If $\Sigma_1 \sim \Sigma_1'$, then $\Sigma_1\boxtimes\Sigma_2\sim\Sigma_1'\boxtimes\Sigma_2$.
	\item [\rm{(ii)}] If $\Sigma_2 \sim \Sigma_2'$, then $\Sigma_1\boxtimes\Sigma_2\sim\Sigma_1\boxtimes\Sigma_2'$.

\end{itemize}

\end{lem}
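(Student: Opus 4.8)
The plan is to mimic the proofs of Theorem~\ref{hglexswitcheq} and Theorem~\ref{bcdlexswitcheq}: lift a $1$-switching function on a single factor to the product by letting it depend only on the relevant coordinate, and then check edge-by-edge that the lifted function realises the claimed switching equivalence. Since the assertion is about the relation $\sim$ (ordinary switching equivalence), it suffices to exhibit an explicit $\{-1,1\}$-valued switching function; no vector-valued machinery is needed here.

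For part (i), since $\Sigma_1 \sim \Sigma_1'$ there is $\eta\colon V(\Sigma_1)\to\{-1,1\}$ with $\Sigma_1^\eta=\Sigma_1'$. I would define $\eta'\colon V(\Sigma_1\boxtimes\Sigma_2)\to\{-1,1\}$ by $\eta'((u_i,v_j))=\eta(u_i)$, so that $\eta'$ ignores the second coordinate. Writing $\sigma$ and $\sigma'$ for the signatures of $\Sigma_1\boxtimes\Sigma_2$ and $\Sigma_1'\boxtimes\Sigma_2$, the computation reduces to evaluating $\sigma^{\eta'}((u_i,v_j)(u_k,v_l))=\eta(u_i)\,\sigma((u_i,v_j)(u_k,v_l))\,\eta(u_k)$ on each of the three edge types in the definition of the strong product.

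The cases split cleanly. When $u_i=u_k$ and $v_j\sim v_l$ (the ``vertical'' edges), $\eta(u_i)=\eta(u_k)$, so the two factors square to $1$ and $\sigma_2(v_jv_l)$ is returned unchanged, which matches $\sigma'$ on such edges. In the remaining two cases $u_i\sim u_k$, so the factors $\eta(u_i),\eta(u_k)$ act precisely on the $\sigma_1$-part and convert $\sigma_1(u_iu_k)$ into $\sigma_1^\eta(u_iu_k)=\sigma_1'(u_iu_k)$, while the $\sigma_2$-factor (present only in the mixed case) is untouched because $\eta'$ does not depend on the second coordinate. Thus $\sigma^{\eta'}=\sigma'$ on every edge, giving $(\Sigma_1\boxtimes\Sigma_2)^{\eta'}=\Sigma_1'\boxtimes\Sigma_2$ and hence $\Sigma_1\boxtimes\Sigma_2\sim\Sigma_1'\boxtimes\Sigma_2$.

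For part (ii), I would run the symmetric argument: take $\eta\colon V(\Sigma_2)\to\{-1,1\}$ with $\Sigma_2^\eta=\Sigma_2'$, set $\eta''((u_i,v_j))=\eta(v_j)$, and repeat the case analysis with the two coordinates interchanged (alternatively one may simply invoke the commutativity of the strong product). The only point requiring care---and the closest thing to an obstacle---is the mixed case $u_i\sim u_k$, $v_j\sim v_l$, where the product signature is $\sigma_1(u_iu_k)\sigma_2(v_jv_l)$: one must confirm that the single-coordinate switching factor switches exactly one factor while leaving the other fixed. This is immediate once one notes that $\eta'$ (resp.\ $\eta''$) depends on only one coordinate, but it is the step where the product structure must genuinely cooperate, so I would write it out explicitly rather than leave it to the reader.
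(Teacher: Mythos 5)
Your proposal is correct and follows essentially the same route as the paper: lift the $1$-switching function $\eta$ to the product via $\eta'((u_i,v_j))=\eta(u_i)$ (and symmetrically for part (ii)) and verify $\sigma^{\eta'}=\sigma'$ edge by edge. The only difference is that you write out the three-case check explicitly where the paper merely asserts the identity, which is a matter of exposition rather than substance.
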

\begin{proof}
	Let $\sigma_1$, $\sigma_1'$, $\sigma_2$, $\sigma_2'$, $\sigma$,  $\sigma'$ and $\sigma''$ denote the signatures of $\Sigma_1$, $\Sigma_1'$, $\Sigma_2$, $\Sigma_2'$,  $\Sigma_1\boxtimes\Sigma_2$,  $\Sigma_1'\boxtimes\Sigma_2$, and  $\Sigma_1\boxtimes\Sigma_2'$ respectively.

	Since $\Sigma_1 \sim \Sigma_1'$, there exist a switching function $\eta:V(\Sigma_1)\rightarrow\{-1,1\}$ such that $\Sigma_1^\eta=\Sigma_1'$. Define the map $\eta':V(\Sigma_1\boxtimes\Sigma_2)\rightarrow\{-1,1\}$ as $\eta'((u_i,v_j))=\eta(u_i)$ for $1\leq i \leq |V(\Sigma_1)|$ and $1\leq j \leq |V(\Sigma_2)|$.
	 Then, for any edge $(u_i,v_j)(u_k,v_l)$ in $\Sigma_1\boxtimes\Sigma_2$, we have, 	$\sigma^{\eta'}((u_i,v_j)(u_k,v_l))=\sigma'((u_i,v_j)(u_k,v_l))$. Thus, $(\Sigma_1\boxtimes\Sigma_2)^{\eta'}=\Sigma_1'\boxtimes\Sigma_2$ and hence $\Sigma_1\boxtimes\Sigma_2\sim \Sigma_1'\boxtimes\Sigma_2$.

	 To prove $(ii)$, consider the map $\mu':V(\Sigma_1\boxtimes\Sigma_2)\rightarrow\{-1,1\}$ defined by $\mu'((u_i,v_j))=\mu(v_j)$ for $1\leq i \leq |V(\Sigma_1)|$ and $1\leq j \leq |V(\Sigma_2)|$, where  $\mu$ is the switching function that switches $\Sigma_2$ to $\Sigma_2'$.
\end{proof}

Using Lemma~\ref{strongswitcheq1} we arrive at the following theorem.
\begin{thm}\label{strongswitcheq}
	Let $\Sigma_1$ and $\Sigma_2$ be two signed graphs and let $\Sigma_1\boxtimes\Sigma_2$ be their strong product. If $\Sigma_1 \sim \Sigma_1'$ and $\Sigma_2 \sim \Sigma_2'$, then $\Sigma_1\boxtimes\Sigma_2\sim\Sigma_1'\boxtimes\Sigma_2'$.
\end{thm}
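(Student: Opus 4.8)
The plan is to reduce the claim entirely to Lemma~\ref{strongswitcheq1} together with the fact that switching equivalence $\sim$ is transitive. The idea is to pass from $\Sigma_1\boxtimes\Sigma_2$ to $\Sigma_1'\boxtimes\Sigma_2'$ in two steps, changing one factor at a time, so that each step is literally an instance of one of the two parts already established in the lemma.

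First I would use the hypothesis $\Sigma_1\sim\Sigma_1'$ and apply part~(i) of Lemma~\ref{strongswitcheq1} to obtain
$$\Sigma_1\boxtimes\Sigma_2\sim\Sigma_1'\boxtimes\Sigma_2.$$
Next I would use the hypothesis $\Sigma_2\sim\Sigma_2'$ and apply part~(ii) of Lemma~\ref{strongswitcheq1}; the key point is that part~(ii) is stated for an \emph{arbitrary} left factor, so I may instantiate it with $\Sigma_1'$ in the first slot, yielding
$$\Sigma_1'\boxtimes\Sigma_2\sim\Sigma_1'\boxtimes\Sigma_2'.$$
Finally, since switching equivalence is an equivalence relation and hence transitive, chaining the two displayed relations gives $\Sigma_1\boxtimes\Sigma_2\sim\Sigma_1'\boxtimes\Sigma_2'$, which is the desired conclusion.

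There is no substantive obstacle here: the entire content of the statement is already contained in the lemma, and the theorem is essentially a bookkeeping composition of its two parts. The only point that deserves explicit mention is that part~(ii) of the lemma is being applied with the \emph{already switched} graph $\Sigma_1'$ as its first factor rather than with $\Sigma_1$; this is legitimate precisely because the lemma imposes no restriction on the factors. If one wanted to make the composition fully transparent, one could also note that the concrete switching realizing the equivalence is $\eta'\mu'$, where $\eta'((u_i,v_j))=\eta(u_i)$ and $\mu'((u_i,v_j))=\mu(v_j)$ are the lifts of the switching functions $\eta$ and $\mu$ witnessing $\Sigma_1\sim\Sigma_1'$ and $\Sigma_2\sim\Sigma_2'$ respectively; but invoking transitivity of $\sim$ avoids recomputing the edge signs and keeps the argument to a single line.
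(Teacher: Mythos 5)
Your proposal is correct and matches the paper's intent exactly: the paper states the theorem as an immediate consequence of Lemma~\ref{strongswitcheq1}, and the intended argument is precisely your two-step composition (change the first factor via part~(i), then the second via part~(ii) with $\Sigma_1'$ in the first slot) followed by transitivity of $\sim$. You have merely made explicit the bookkeeping that the paper leaves to the reader.
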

\begin{cor}
	If $\Sigma_1$ and $\Sigma_2$ are balanced, then so is $\Sigma_1\boxtimes\Sigma_2$.
\end{cor}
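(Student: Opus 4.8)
The plan is to reduce to the all-positive case and then invoke Theorem~\ref{strongswitcheq} together with the switching-invariance of balance. Since $\Sigma_1$ is balanced, it can be switched to an all-positive signed graph $\Sigma_1'$, so $\Sigma_1 \sim \Sigma_1'$; likewise $\Sigma_2 \sim \Sigma_2'$ for some all-positive $\Sigma_2'$. By Theorem~\ref{strongswitcheq}, these two switchings combine to give $\Sigma_1\boxtimes\Sigma_2 \sim \Sigma_1'\boxtimes\Sigma_2'$, so it suffices to establish balance of the product of the all-positive representatives.

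Next I would read off the signature of $\Sigma_1'\boxtimes\Sigma_2'$ directly from the definition of the strong product. Every edge sign is one of $\sigma_1'(u_iu_k)$, $\sigma_2'(v_jv_l)$, or the product $\sigma_1'(u_iu_k)\sigma_2'(v_jv_l)$, according to which of the three adjacency cases applies. Since $\sigma_1'$ and $\sigma_2'$ are identically $+1$ on their respective edge sets, each of these three expressions evaluates to $+1$. Hence $\Sigma_1'\boxtimes\Sigma_2'$ is all-positive and in particular balanced. Finally, because switching preserves the sign of every cycle (balance is a $1$-switching invariant, as recalled in the introduction), the equivalence $\Sigma_1\boxtimes\Sigma_2 \sim \Sigma_1'\boxtimes\Sigma_2'$ transfers balance back to $\Sigma_1\boxtimes\Sigma_2$, completing the argument.

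There is essentially no obstacle here: the only point deserving a moment's attention is verifying that all three cases in the definition of the strong product yield $+1$ when both factors are all-positive, the mixed case relying on $(+1)(+1)=+1$. The substantive work has already been carried out in Lemma~\ref{strongswitcheq1} and Theorem~\ref{strongswitcheq}, so this corollary is merely the combination of ``pass to all-positive representatives'' with the switching-invariance of balance.
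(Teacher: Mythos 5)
Your proof is correct and follows exactly the route the paper intends: switch both factors to all-positive representatives, apply Theorem~\ref{strongswitcheq}, observe the resulting strong product is all-positive from the definition, and transfer balance back via switching invariance. The paper leaves this corollary unproved as an immediate consequence of Theorem~\ref{strongswitcheq}, and your argument is precisely that consequence spelled out.
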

\begin{rmk}
	If $\Sigma_1$ and $\Sigma_2$ are antibalanced, it need not imply that their strong product $\Sigma_1\boxtimes\Sigma_2$ is antibalanced. As an example, consider  $\Sigma_1=\Sigma_2=-K_2$. Then their strong product is the balanced signed graph $\Sigma_1\boxtimes\Sigma_2=(K_4,\sigma)$. Hence the signed graph $-(\Sigma_1\boxtimes\Sigma_2)=(K_4,-\sigma)$ contains an unbalanced triangle, making it unbalanced. Thus, $\Sigma_1\boxtimes\Sigma_2$ is not antibalanced.
\end{rmk}
\begin{thm}
	Let $\Sigma_1=(G_1,\sigma_1)$ and $\Sigma_2=(G_2,\sigma_2)$ be two signed graphs and $\Sigma_1\boxtimes\Sigma_2$ be their strong product. Then
	$$\bdim(\Sigma_1\boxtimes\Sigma_2)=
	\begin{cases}
		\bdim(\Sigma_1), \mbox{  if $\Sigma_2$   is balanced}\\
		\bdim(\Sigma_2), \mbox{  if $\Sigma_1$   is balanced}.
	\end{cases}
	$$
\end{thm}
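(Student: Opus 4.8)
The plan is to mimic the proof of the earlier Cartesian-product theorem: combine a $k$-positive function for the unbalanced factor with an ordinary $1$-switching that trivializes the balanced factor, assembled through the product rule $\zeta((u_i,v_j))=\zeta_1(u_i)\zeta_2(v_j)$. First I would treat the case $\bdim(\Sigma_1)=k$ with $\Sigma_2$ balanced. By Lemma~\ref{strongswitcheq1}(ii) and the $1$-switching invariance of the balancing dimension we may assume $\Sigma_2$ is all-positive; concretely, fix a $k$-positive function $\zeta_1:V(\Sigma_1)\to\Omega^k$ for $\Sigma_1$ together with a scalar switching function $\zeta_2:V(\Sigma_2)\to\{-1,1\}$ with $\sigma_2^{\zeta_2}\equiv+1$, and define $\zeta((u_i,v_j))=\zeta_1(u_i)\zeta_2(v_j)$.

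Next I would check that $\zeta$ is a legitimate vector-valued switching function and that it switches every edge positive. Because $\zeta_2$ is scalar-valued, it factors out of every inner product as $\zeta_2(v_j)\zeta_2(v_l)$, or as $(\zeta_2(v_j))^2=1$ when $v_j=v_l$. Running through the three edge types of the strong-product signature, the sign collapses to a product of $\sigma_1^{\zeta_1}$ and/or $\sigma_2^{\zeta_2}$ on the relevant edges: the ``horizontal'' edges ($v_j=v_l$) give $\sigma_1^{\zeta_1}(u_iu_k)=+1$, the ``vertical'' edges ($u_i=u_k$) give $\zeta_2(v_j)\zeta_2(v_l)\,\sgn(\lVert\zeta_1(u_i)\rVert^2)\,\sigma_2(v_jv_l)=\sigma_2^{\zeta_2}(v_jv_l)=+1$, and the ``diagonal'' edges give $\sigma_1^{\zeta_1}(u_iu_k)\,\sigma_2^{\zeta_2}(v_jv_l)=+1$. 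One must note here that $\zeta_1(u_i)\ne 0$, so that $\lVert\zeta_1(u_i)\rVert^2>0$ in the vertical case; this holds because $\zeta_1$ is required to give nonzero inner products across the edges of the connected graph $\Sigma_1$. These computations also show each inner product along an edge is nonzero, so $\zeta$ is admissible, and we conclude $\bdim(\Sigma_1\boxtimes\Sigma_2)\le k$.

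For the reverse inequality I would observe that $\Sigma_1$ embeds as a subgraph of $\Sigma_1\boxtimes\Sigma_2$ (fix any $v_j$ and vary $u_i$), so $\bdim(\Sigma_1\boxtimes\Sigma_2)\ge\bdim(\Sigma_1)=k$, giving equality. The case ``$\Sigma_1$ balanced'' is entirely symmetric, invoking Lemma~\ref{strongswitcheq1}(i) and the symmetric subgraph embedding obtained by fixing $u_i$ and varying $v_j$.

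The main obstacle—indeed the only genuinely new point relative to the Cartesian case—is the diagonal edge type, where $u_i\sim u_k$ and $v_j\sim v_l$ simultaneously and the sign is the product $\sigma_1(u_iu_k)\sigma_2(v_jv_l)$; this term is absent for the Cartesian product. I expect it to resolve cleanly precisely because $\zeta_2$ is $\{-1,1\}$-valued, so the factor $\zeta_2(v_j)\zeta_2(v_l)$ converts $\sigma_2(v_jv_l)$ into $\sigma_2^{\zeta_2}(v_jv_l)=+1$ while leaving $\sigma_1(u_iu_k)\,\sgn\langle\zeta_1(u_i),\zeta_1(u_k)\rangle=\sigma_1^{\zeta_1}(u_iu_k)=+1$ untouched.
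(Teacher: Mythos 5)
Your proposal is correct and follows essentially the same route as the paper: the authors likewise set $\zeta((u_i,v_j))=\zeta_1(u_i)\zeta_2(v_j)$ with $\zeta_1$ a $k$-positive function for the unbalanced factor and $\zeta_2$ a $1$-switching trivializing the balanced one, and then use the subgraph bound for equality. Your explicit verification of the three edge types (including the diagonal case and the observation that $\zeta_1(u_i)\neq 0$) only spells out details the paper leaves implicit.
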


\begin{proof}
	Suppose $\bdim(\Sigma_1)=k$ and $\Sigma_2$ is balanced. Let $\zeta_1:V(\Sigma_1)\rightarrow\Omega^k$ and $\zeta_2:V(\Sigma_2)\rightarrow \{-1,1\}$ be the corresponding switching functions. Now, the function $\zeta: V(\Sigma_1\boxtimes\Sigma_2)\rightarrow\Omega^k$ defined by $\zeta((u_i,v_j))=\zeta_1(u_i)\zeta_2(v_j)$ switches $\Sigma_1\boxtimes\Sigma_2$ to all-positive, and hence $\bdim(\Sigma_1\boxtimes\Sigma_2)\leq k$. However, since $\Sigma_1$ is a subgraph of $\Sigma_1\boxtimes\Sigma_2$, we must have $\bdim(\Sigma_1\boxtimes\Sigma_2)=k=\bdim(\Sigma_1)$.

Similar is the proof of the next part.
\end{proof}
\section{Conclusion and Scope}
In this paper, we have studied the properties of balancing dimension of various products of signed graphs, namely, the Cartesian product, the lexicographic product, the tensor product and the strong product. We found the relationship between balancing dimensions of various signed graph products and their factors, provided one of them is balanced or all-positive.  We also computed the balancing dimensions of the Cartesian product of  unbalanced cycles, Cartesian product of antibalanced signed complete graphs and lexicographic product of antibalanced signed complete graphs. We also proved some results on switching equivalence in the case of the lexicographic products and the strong product. Finding the balancing dimensions of products of general unbalanced signed graphs, finding the relation between balancing dimensions of signed graph products and its factors,  and studying  properties of balancing dimension of other existing products of signed graphs are some exciting areas for further investigation.
\section*{Acknowledgments}
The first author would like to acknowledge his gratitude to the University Grants Commission (UGC), India, for providing financial support in the form of Junior Research fellowship (NTA Ref. No.: 191620039346). The authors express their sincere gratitude to Professor Thomas Zaslavsky, Binghamton University (SUNY), Binghamton, and Prof. Shahul Hameed K, K M M Government Women’s College, Kannur,   for their  valuable suggestions.



\begin{thebibliography}{9}
 \bibitem{bcd} M. \ Brunetti, M. \ Cavaleri and A. \ Donno, A lexicographic product for signed graphs, \textit{Australasian J.\ Combin.},\ \textbf{74(2)} (2019) 332--343.
\bibitem{bcd1} M. \ Brunetti, M. \ Cavaleri and A. \ Donno, Erratum to the article
`A lexicographic product for signed graphs', \textit{Australasian J.\ Combin.},\ \textbf{75(2)} (2019) 256--258.
\bibitem{sg1} K.A. Germina,  K. Shahul Hameed and Thomas Zaslavsky, (2011). On product and line graphs of signed graphs, their eigenvalues and energy, \textit{Linear Algebra Appl.},\ \textbf{435},  2432--2450.
\bibitem{sg2}  S.\ Hameed K,  Germina K. A. , On composition of signed graphs, \textit{Discuss.\ Math.\ Graph Theory}, \textbf{32}  (2012) 507--516.
\bibitem{sa1} S.\ Hameed K, A.\ Mathew, Germina K. A, T.\ Zaslavsky,   Vector Valued Switching in Signed Graphs,  \textit{Communications in Combinatorics and Optimization}\ (2023) \doi{10.22049/CCO.2023.28591.1624}.
\bibitem{fh} F. Harary, {\bf Graph Theory},  Addison Wesley, Reading, Mass., 1969.
\bibitem{fh1} F. Harary, On the notion of balance of a signed graph, \textit{Michigan Math.\ J.}\ \textbf{2} (1953) 143--146.
\bibitem{dl} D. Lajou, On Cartesian products of signed graphs, \textit{Discrete Applied Mathematics}, \textbf{319} (2022) 533-555.
\bibitem{vm} V. Mishra, Graphs associated with $(0, 1)$ and $(0, 1, -1)$ matrices, \textit{Ph.D. Thesis, IIT Bombay},  (1974).
\bibitem{ds1} D. Sinha, P. Garg, Balance and antibalance of the tensor product of two signed graphs, \textit{Thai Journal of Mathematics}, \textbf{12} (2013) 303-311.
\bibitem{tz1} T.\ Zaslavsky, Signed graphs,  \textit{Discrete Appl.\ Math.}\ \textbf{4} (1982) 47--74.  Erratum,  \textit{Discrete Appl.\ Math.}\ \textbf{5} (1983) 248.
\bibitem{tz3} T.\ Zaslavsky,  A mathematical bibliography of signed and gain graphs and allied areas.  {\it Electronic J. Combinatorics}, Dynamic Surveys \#8 (1998).
 \end{thebibliography}
\end{document}